\theoremstyle{plain}
\newtheorem{lemma}{Lemma}[section]
\newtheorem{theorem}[lemma]{Theorem}
\newtheorem{prop}[lemma]{Proposition}
\newtheorem{cor}[lemma]{Corollary}
\theoremstyle{definition}
\newtheorem{remark}[lemma]{Remark}
\newcommand{\twomat}[4]{ \left( \begin{array}{cc}
#1 & #2 \\
#3 & #4
\end{array} \right)}
\newcommand{\cvec}[2] {\left( \begin{array}{c}
#1 \\
#2
\end{array}\right)}
\newcommand{\twopartdef}[4]
{
	\left\{
		\begin{array}{ll}
			#1 & \mbox{if  } #2 \bigskip \\
			#3 & \mbox{if  } #4
		\end{array}
	\right.
}
\newcommand{\R}{\mathbb{R}}
\newcommand{\Z}{\mathbb{Z}}
\newcommand{\h}{\mathcal{H}}
\newcommand{\Ca}{\mathcal{C}}
\newcommand{\M}{M}
\newcommand{\tu}{\rightarrow}
\newcommand{\emb}{\hookrightarrow}
\newcommand{\cemb}{\subset \subset}
\newcommand{\In}{\subset}
\newcommand{\Om}{\Omega}
\newcommand{\om}{\omega}
\newcommand{\dl}{{\delta}}
\newcommand{\Dl}{{\Delta}}
\newcommand{\ta}{{\theta}}
\newcommand{\al}{{\alpha}}
\newcommand{\ed}{{\rm d}}
\newcommand{\id}{\,\,\ed}
\newcommand{\dv}{{\rm d}^{\ast}}
\newcommand{\D}{{\nabla}}
\newcommand{\ti}[1]{{\tilde{#1}}}
\newcommand{\fr}[2]{\frac{#1}{#2}}
\newcommand{\sm}{{\setminus}}
\newcommand{\Nn}{\mathcal{N}}
\newcommand{\T}{{\rm T}}
\def\Xint#1{\mathchoice
{\XXint\displaystyle\textstyle{#1}}%
{\XXint\textstyle\scriptstyle{#1}}%
{\XXint\scriptstyle\scriptscriptstyle{#1}}%
{\XXint\scriptscriptstyle\scriptscriptstyle{#1}}%
\!\int}
\def\XXint#1#2#3{{\setbox0=\hbox{$#1{#2#3}{\int}$}
\vcenter{\hbox{$#2#3$}}\kern-.5\wd0}}
\def\dint{\Xint-}
\begin{document}
 \title{Higher integrability for solutions to a system of critical elliptic PDE}
\author{Ben Sharp \footnote{{\sc Department of Mathematics, Huxley Building, 180 Queen's Gate, South Kensington Campus, Imperial College London, SW7 2AZ, UK. Email:  benjamin.sharp@imperial.ac.uk} Address at time of writing: {\sc mathematics institute, university of Warwick, Coventry, CV4 7AL,
UK}}}
\maketitle 

\begin{abstract}
We give new estimates for a critical elliptic system introduced by Rivi\`ere-Struwe which generalises PDE solved by (almost) harmonic maps from a Euclidean ball $B_1 \In \R^n$ into closed Riemannian manifolds $\Nn\emb \R^m$. Solutions $u:B_1\to \R^m$ take the form 
$$-\Dl u^i = \Om^i_j.\D u^j  $$ 
where $\Om$ maps into antisymmetric $m\times m$ matrices with entries in $\R^n$.  Here $\Om$ and $\D u$ belong to a Morrey space which makes the PDE critical from a regularity perspective. We use the Coulomb frame method employed by Rivi\`ere-Struwe along with the H\"older regularity already acquired therein, coupled with an extension of a Riesz potential estimate,  in order to improve the known regularity and estimates for solutions $u$. These methods apply when $n=2$ thereby re-proving the full regularity in this case using Coulomb gauge methods. Moreover they lead to a self contained proof of the local regularity of stationary harmonic maps in high dimension.
\end{abstract}

\section{Introduction}

The system of PDE under consideration here, introduced by Rivi\`ere \cite{riviere_inventiones} and Rivi\`ere-Struwe \cite{riviere_struwe}, generalises many PDE present in the study of critical geometric problems. It has `hidden' regularity properties by virtue of the anti-symmetry of the term `$\Om$', and one can both recover and somewhat improve the regularity theory for the aforementioned  geometric PDE via a much more general theory. For instance, consider weakly harmonic maps $u\in W^{1,2}(B_1,\Nn)$ from the Euclidean ball $B_1\In \R^n$ into a closed Riemannian manifold $\Nn\emb \R^m$. These are defined to be critical points with respect to outer variations of 
\begin{equation*}
E(u):= \fr12 \int_{B_1} |\D u|^2,
\end{equation*}
meaning that maps $u\in W^{1,2}(B_1,\Nn)$ form the natural admissible function space for this problem. It is known that $u$ weakly solves a PDE of the form 
\begin{equation}\label{hm}
-\Dl u = A(u)(\D u, \D u)
\end{equation}
where $A$ is the second fundamental form of the embedding $\Nn\emb\R^m$.  Since $\D u\in L^2$ one can estimate the right hand side of \eqref{hm} in $L^1$ - $A$ is a bounded bilinear form. From here the $L^1$ theory for the Laplacian is not enough to begin any kind of bootstrapping argument unless $n=1$. For instance, when $n=2$ standard theory gives estimates on $\D u$ in the Lorentz space $L^{2,\infty}$, a space slightly larger than $L^2$. For $n\geq 3$ the analogous estimates give control on $\D u$ in $L^{\fr{n}{n-1}, \infty}$ which is much worse than being in $L^2$. Indeed the regularity theory for weakly harmonic maps in higher dimensions is doomed to fail since there exist `nowhere continuous' weakly harmonic maps whenever $n\geq 3$ \cite{riviere_singular}. In two dimensions however, all weakly harmonic maps are smooth, which one can prove using a moving frame technique due to H\'elein \cite{helein_regularity}. The same technique can be used to prove partial regularity in higher dimensions (\cite{evans_spheres} for $\Nn = S^{m-1}$ and \cite{bethuel_singset} for general closed $\Nn$) where one can prove smoothness under the condition that $\|\D u\|_{L^2}$ decays sufficiently quickly on small balls - i.e. it lies in a Morrey space. Roughly speaking, these techniques use the fact that under an appropriate frame the `bad' $L^1$ terms can be replaced by better $\h^1$ terms. The space $\h^1$ is a Hardy space - functions which are in $L^1$ but have additional cancellation properties - see below for a definition.

Given $g\in L_{loc}^1(O)$ and $O\In\R^n$, we say that $g$ lies in the Morrey space $M^{p,\beta}(O)$ if the following function 
\begin{equation*}
M_{\beta}[|g|^p](x): = \sup_r r^{-\beta} \int_{B_r(x)\cap O} |g|^p 
\end{equation*}
is bounded; with a norm given by $\|g\|_{M^{p,\beta}(O)}^p:= \|M_{\beta}[|g|^p]\|_{L^{\infty}(O)}$. See appendix \ref{Morrey} for more details. We have followed \cite{giaquinta} in terms of our choice of indices, though we use $M^{p,\beta}$ rather than $L^{p,\beta}$ so as to avoid confusion with Lorentz spaces. 

From now on we will denote $M_n[g] = M[g]$ giving the usual maximal function. We now define a more refined version for $g:\R^n\to \R$:  let $\phi \in C_c^{\infty}(B_1)$ with $\|\D \phi\|_{L^{\infty}}\leq 1$ and $\phi_t(x): = t^{-n}\phi(\fr{x}{t})$. We set 
\begin{equation*}
g_{\ast}(x) = \sup_{0<t<\infty} |\phi_t \ast g (x)|
\end{equation*}
and say that $g\in \h^1(\R^n)$ if $g_\ast\in L^1(\R^n)$ with norm $\|g\|_{\h^1(\R^n)} = \|g_{\ast}\|_{L^1(\R^n)}$. It can be shown that this norm is independent of the choice of $\phi$ up to a constant - see \cite{semmes_primer}.  The related local Hardy space $h^1$ is defined similarly; we require that 
\begin{equation*}
g_{\ti\ast}(x) = \sup_{0<t<1} |\phi_t \ast g (x)|\in L^1
\end{equation*}
with the obvious associated norm - we clearly have the continuous embeddings $\h^1(\R^n)\emb h^1(\R^n)  \hookrightarrow L^1(\R^n)$. Recall that $M[g] \in L^p$ iff $g\in L^p$ when $p>1$, and that $M[g]\in L^1$ iff $g\equiv 0$. Therefore $g\in \h^1$ iff the more refined version of the 
maximal function $g_{\ast}$, is integrable. We see $\h^1$ as being a replacement for $L^1$ in this sense. Most importantly for us the Hardy spaces behave well with respect to singular integral operators and therefore in elliptic regularity theory, again in contrast to the $L^1$ theory. 

As mentioned above, when $n\geq 3$, there exist nowhere regular weakly harmonic maps. If however, we consider weakly stationary harmonic maps - critical points of $E$ with respect to outer {\emph{and}} inner variations - then we obtain that
$$\rho(r) = r^{2-n}\int_{B_r(x)} |\D u|^2$$
is monotone increasing for any $x\in B_1$, $r<1-|x|$ see \cite{simon_regularity}. In particular this gives $\D u \in M_{loc}^{2,n-2}(B_1)$. Moreover given any $\epsilon >0$ there exists a closed set $S$ such that for any $z\notin S$ we can find $r_0 = r_0(z)>0$ with $\sup_{r<r_0} r^{2-n}\int_{B_r(z)} |\D u|^2 \leq\epsilon$, and $H^{n-2}(S) = 0$ where $H$ is the Hausdorff measure - see for instance \cite[Chapter IV]{giaquinta}. It is an easy exercise to check that by the monotonicity of $\rho$ we can equivalently say that for any $z\notin S$ there is some $r_0(z)>0$ with $\|\D u\|_{M^{2,n-2}(B_{r_0}(z))} \leq \epsilon$.

The $\epsilon$ regularity results of Evans-Bethuel for $n\geq 3$ state that there is an $\epsilon>0$ such that whenever a weakly stationary harmonic map $u$ satisfies $\rho(R)\leq \epsilon$ for some $x\in B_1$ and $0<R<1-|x|$, then $u$ is smooth on $B_{\fr{R}2}(x)$. Thus we can conclude that weakly stationary harmonic maps are smooth away from a singular set $S$ with $H^{n-2}(S) = 0$.

In \cite{riviere_inventiones} and \cite{riviere_struwe} it is shown that by adding a null term to the harmonic map equation \eqref{hm} one has that $u:B_1 \to \Nn\emb\R^m$ solves\begin{equation}\label{f}
-\Dl u = \Om^{A} .\D u,
\end{equation}
for some $\Om^{A}: B_1 \to so(m)\otimes \wedge^1 \R^n$ and the product $[\Om^{A}.\D u]^i = \sum_j\langle (\Om^{A})^i_j, \D u^j \rangle$ is an inner product of forms coupled with matrix multiplication. As we shall see, the anti-symmetry of $\Om^{A}$ is crucial for obtaining any higher regularity of this PDE. In this paper we will study this equation under very natural and critical conditions on arbitrary $\Om$ and $u$ solving \eqref{f}.

In the setting of harmonic maps one also knows that there is some $C=C(\Nn)>0$ such that $|\Om^{A}(x)|\leq C|\D u(x)|$. Therefore the natural conditions for $\Om$ and $\D u$ are that they lie in $M^{2,n-2}$ as in \cite{riviere_struwe}, where they prove that, when $\|\Om\|_{M^{2,n-2}(B_1)}$ is small enough, $u$ is H\"older continuous, Theorem \ref{old}.

The starting point for us will be to consider arbitrary $u\in W^{1,2}(B_1,\R^m)$ weakly solving 
\begin{equation}\label{ff}
-\Dl u = \Om.\D u + f
\end{equation}
on the unit ball $B_1\In \R^n$, for some $\Om \in M^{2,n-2}(B_1,so(m)\otimes \wedge^1\R^n)$ and $f\in L^p$ under the assumptions that $\D u \in M^{2,n-2}(B_1, \R^m \otimes \wedge^1 \R^n)$ and $\fr{n}2 <p<n$. The question which will be addressed is whether or not one has higher integrability of $\D u$ (beyond $L^2$) for general systems \eqref{ff}. In \cite{riviere_struwe}, \cite{rupflin} and \cite{schikorra_frames} the H\"older regularity is obtained by improving decay on $\|\D u\|_{L^s(B_r)}$ for $s<2$. Therefore any higher integrability does not follow directly from such a result. Moreover it is difficult to see how one might boot-strap this information back into the PDE to improve the regularity. 
 
Estimating the right hand side of \eqref{ff} using H\"older's inequality leaves us with $\Dl u \in \M^{1,n-2}$ (=$L^1$ when $n=2$), and the best we can do using singular integral estimates is to conclude that $\D u \in \M^{(2,\infty),n-2}$ (= $L^{(2,\infty)}$ when $n=2$). See appendix \ref{Morrey} for definitions and results if necessary. These spaces are slightly worse than the spaces we started with, therefore we have lost some information and bootstrapping fails. The antisymmetry condition on $\Om$ is therefore key to unlocking the hidden regularity of this system as first noticed by Rivi\`ere \cite{riviere_inventiones} - it is known that by dropping the antisymmetry of $\Om$ the regularity theory fails. 

We interpret $\Om$ as connection forms for the trivial bundle $B_1 \times \R^m$, for which $\ed u \in \Gamma(\T^{\ast}B_1 \otimes \R^m)$ and \eqref{ff} reads $\dv_{\Om}(\ed u) = f$ where $\dv_{\Om}$ is the induced covariant divergence given by $\Om$ (the formal adjoint of the covariant exterior derivative, $\dv_{\Om}(\ed u) = \dv \ed u  - \ast [\Om \wedge \ast \ed u]$).
With this more geometric setting it is possible to talk of changes of frame or gauge, and crucially the antisymmetry of $\Om$ means that any change of gauge lies in the orthogonal group and carries a natural $L^{\infty}$ bound. A change of gauge here is a purely local affair and consists of a map $P:B_1 \tu SO(m)$ in which we can express $\ed u$, $\Om$ and therefore our PDE. Under this change of gauge the new connection forms $\Om_P$ look like $\Om_P = P^{-1}\ed P + P^{-1}\Om P$ and we have that $\dv_\Om (\ed u) = P (\dv_{\Om_P} (P^{-1}\ed u)).$ Therefore, solutions to \eqref{ff} are also solutions to 
$$\dv_{\Om_P} (P^{-1}\ed u) = \dv (P^{-1}\ed u) - \ast[\Om_P\wedge \ast P^{-1}\ed u] = P^{-1}f$$
for any such $P$.

\subsection{Two-dimensional domains}

Given that the problem here is concerned with improving regularity, the game has been to find a gauge that forces this equation to exhibit nice regularity properties. When $n=2$ it was shown in \cite{riviere_inventiones} that we can change the gauge such that the term $\Om.\D u$ is effectively replaced by a Jacobian determinant. Thus we may use Hardy space methods \cite{clms} or Wente-type estimates \cite{wente} to  improve our situation. It was shown in \cite{riviere_inventiones} that the most suitable gauge transform is a small perturbation of the Coulomb gauge (or Uhlenbeck gauge) and in fact it is necessary for the gauge to leave the orthogonal group; moreover these methods allow us to write the PDE as a conservation law. 

Solutions with $f \equiv 0$ are shown (in \cite{riviere_inventiones}) to describe critical points of conformally invariant elliptic Lagrangians under some natural growth assumptions and for appropriate $\Om$. 
In particular when $f \equiv 0$, \eqref{f} describes harmonic maps and prescribed mean curvature equations from Riemannian surfaces into closed, $C^2$ Riemannian manifolds $N \emb \R^m$ isometrically embedded in some Euclidean space.

This PDE has subsequently been studied from a regularity and compactness perspective, see for instance \cite{riviere_laurain}, \cite{LZ}, \cite{schikorra_boundary}, \cite{schikorra_frames}, \cite{Sh_To}. In \cite{Sh_To}, it is shown that general solutions to \eqref{f} (when $n=2$) are in $W^{2,s}_{loc}$ for all $s<2$ by means of a Morrey estimate, and we see that Theorem \ref{Main} and Proposition \ref{main} are the analogues of \cite[Theorem 1.1]{Sh_To} and \cite[Lemma 7.3]{Sh_To} in the higher dimensional setting.  

\subsection{Higher dimensional domains}

For $n\geq 2$ Rivi\`ere-Struwe \cite{riviere_struwe} showed that we can find a Coulomb gauge in the Morrey space setting (see appendix \ref{hodge}), and that this is enough to conclude partial regularity for general solutions. Again this comes down to the appearance of terms that lie in the Hardy space $\h^1$. It is shown that solutions to \eqref{f} describe harmonic (and almost harmonic) maps from the Euclidean ball into arbitrary Riemannian manifolds.
As outlined in \cite{riviere_struwe} it would be difficult to carry out the same techniques when $n \geq 3$ as in the case $n=2$, however Laura Keller \cite{keller} has shown that when $\Om$ and $\D u$ lie in a (slightly more restrictive) Besov-Morrey space, then the methods as in the two dimensional case apply. 

The regularity obtained in \cite{riviere_struwe} and \cite{rupflin} is as follows (see also \cite{schikorra_frames}): 

\begin{theorem}\label{old}
Let $u$, $\Om$ and $f$ be as in \eqref{ff}. There exists $\epsilon = \epsilon(n,m,p)$ such that whenever $\|\Om\|_{\M^{2,n-2}(B_1)}^2 \leq \epsilon$ then $u \in C^{0,\gamma}_{loc}$ where $\gamma = 2 -\fr np \in (0,1)$. 
\end{theorem}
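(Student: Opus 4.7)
The plan is to reduce the claim to a Morrey--Campanato decay estimate
\[
\int_{B_r(x_0)} |\D u|^2 \le C\, r^{n-2+2\gamma}, \qquad \gamma = 2 - \tfrac{n}{p},
\]
uniform for $x_0 \in B_{1/2}$ and small $r$, from which Morrey's embedding gives $u \in C^{0,\gamma}_{loc}$. The implementation is the standard harmonic-replacement/Campanato iteration, but performed after a gauge change that exploits the antisymmetry of $\Om$.

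First, I would invoke Rivi\`ere--Struwe's Morrey-space Coulomb gauge (appendix \ref{hodge}): if $\|\Om\|_{\M^{2,n-2}}$ is small enough, there exists $P \in W^{1,2}(B_1, SO(m))$ with $\dv \Om_P = 0$ for $\Om_P := P^{-1}\ed P + P^{-1}\Om P$, and with $\|\D P\|_{\M^{2,n-2}} + \|\Om_P\|_{\M^{2,n-2}} \le C\|\Om\|_{\M^{2,n-2}}$. Equation \eqref{ff} reads $\dv(P^{-1}\ed u) = \ast[\Om_P \wedge \ast(P^{-1}\ed u)] + P^{-1}f$. Using the Coulomb condition, Hodge-decompose $\Om_P = \dv \xi$ on $B_1$ with $\|\D\xi\|_{\M^{2,n-2}} \le C\|\Om_P\|_{\M^{2,n-2}}$; this is where the antisymmetry is crucial, since it guarantees $\xi$ lives in the same matrix algebra and the Hodge estimate is available on Morrey scales.

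Next, fix $x_0 \in B_{1/2}$ and $r \in (0, 1/2)$, and write $v := P^{-1}u = h + w$ where $\Dl h = 0$ on $B_r(x_0)$ with $h = v$ on $\partial B_r(x_0)$ and $w \in W^{1,2}_0(B_r(x_0))$. The harmonic part satisfies the elementary decay $\int_{B_{\ta r}(x_0)} |\D h|^2 \le C\ta^n \int_{B_r(x_0)} |\D h|^2$ for every $\ta \in (0,1)$. Testing the equation for $w$ against $w$, substituting $\Om_P = \dv\xi$ and integrating by parts once, then using Sobolev--Poincar\'e on $B_r$ together with the Morrey--H\"older inequality, delivers
\[
\int_{B_r(x_0)} |\D w|^2 \le C \|\Om\|_{\M^{2,n-2}}^2 \int_{B_r(x_0)} |\D u|^2 + C r^{n-2+2\gamma}\|f\|_{L^p(B_1)}^2,
\]
where the $f$-contribution is controlled by the Riesz-potential estimate on Morrey spaces (appendix \ref{Morrey}), exploiting $p > n/2$. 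Combining with the $h$-decay produces the iterative inequality
\[
\int_{B_{\ta r}(x_0)} |\D u|^2 \le C\bigl(\ta^n + \|\Om\|_{\M^{2,n-2}}^2\bigr) \int_{B_r(x_0)} |\D u|^2 + C r^{n-2+2\gamma} \|f\|_{L^p}^2.
\]
Choose $\ta$ so that $C\ta^n \le \tfrac12 \ta^{n-2+2\gamma}$, then choose $\epsilon$ so that $C\epsilon \le \tfrac12 \ta^{n-2+2\gamma}$; Campanato's iteration lemma then yields the target Morrey-Campanato decay, and hence H\"older continuity with the stated exponent.

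The main obstacle is the $w$-estimate above. A direct H\"older bound on $\int w \cdot (\Om_P . \D u)$ gives only $\|\Om\|_{\M^{2,n-2}} \|\D u\|_{L^2(B_r)}^2$ with no power of $r$ to spare, which is insufficient to iterate to the Campanato exponent. Passing to the Coulomb gauge so that $\Om_P = \dv\xi$, then moving one derivative off of $\Om_P$ in the tested equation, is precisely the compensated-compactness mechanism that both (i) uses the antisymmetry of $\Om$ in an essential way, and (ii) delivers the quadratic smallness and correct $r$-decay needed to close the iteration. Handling the $f$-term uniformly on Morrey scales is technical but follows directly from the Riesz potential machinery recorded in the paper's appendix.
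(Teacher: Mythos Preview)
The paper does not actually prove Theorem~\ref{old}; it is quoted from \cite{riviere_struwe} and \cite{rupflin} and used as an \emph{input} for the new results. So there is no ``paper's own proof'' to compare against, but your outline does not reproduce the Rivi\`ere--Struwe/Rupflin argument either, and it contains a genuine gap.

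The problematic step is the $w$-estimate
\[
\int_{B_r(x_0)} |\D w|^2 \le C \|\Om\|_{\M^{2,n-2}}^2 \int_{B_r(x_0)} |\D u|^2 + C r^{n-2+2\gamma}\|f\|_{L^p}^2.
\]
After writing $\Om_P=\dv\xi$ and integrating by parts once, the term you must control is of the type $\int_{B_r}\langle\xi,\ed w\wedge P^{-1}\ed u\rangle$ (plus lower-order terms involving $\ed P^{-1}$ and $u$). You only know $\D\xi\in \M^{2,n-2}$, hence $\xi\in BMO$, \emph{not} $L^\infty$; Sobolev--Poincar\'e gives at best $\xi\in L^{2^*}$, which forces the complementary H\"older factor to be $\|\D w\,\D u\|_{L^{2n/(n+2)}}$, i.e.\ $\D u\in L^n$ --- information you do not have. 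Equivalently, the div--curl structure places the right-hand side in the Hardy space $\h^1\cap M^{1,n-2}$, and for $n\ge 3$ one has $\h^1\not\hookrightarrow H^{-1}$, so testing against $w\in W_0^{1,2}$ does not yield the claimed bound. This is precisely the obstruction the paper isolates: its Proposition~\ref{decay1} is the $L^2$-decay estimate you are asserting, and its proof requires both the new embedding $h^1\cap M^{1,n-2}\hookrightarrow H^{-1}$ (Corollary~\ref{adams}) \emph{and} the a~priori H\"older bound $[u]_{C^{0,\gamma}}$ from Theorem~\ref{old} itself. Your scheme is therefore circular as written.

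The actual proofs in \cite{riviere_struwe}, \cite{rupflin}, \cite{schikorra_frames} avoid this by iterating decay of $\|\D u\|_{L^s(B_r)}$ for some $s<2$ (cf.\ the remark just above Theorem~\ref{old} in the paper), where the duality/H\"older exponents match up without needing $\h^1\hookrightarrow H^{-1}$. A secondary issue: setting $v=P^{-1}u$ and doing harmonic replacement on $v$ is not quite right, since the gauged equation is $\dv(P^{-1}\ed u)=\ldots$, not $\Dl(P^{-1}u)=\ldots$; the clean object to decompose is the one-form $P^{-1}\ed u$ (as in Lemma~\ref{hodged}), not the function $P^{-1}u$.
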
  
The optimal H\"older regularity was shown in \cite{rupflin} along with an estimate. To see the optimality just consider the case $\Om \equiv 0$; we have that $u \in W^{2,p}_{loc} \emb C^{0,2-\fr np}_{loc}$ when $\fr n2 < p < n$ by Calderon-Zygmund theory and Morrey estimates. 

As stated in \cite{riviere_struwe}, this theorem allows us to extend the regularity theory for stationary harmonic maps from the Euclidean ball into closed $C^2$ Riemannian manifolds immersed in some Euclidean space. More precisely it is possible to show that any weakly stationary harmonic map is smooth away from a singular set $S$ with $H^{n-2}(S)=0$. This follows from a classical theorem stating that continuous weakly harmonic maps are smooth.

The methods for the higher dimensional theory have also been used in the study of Dirac harmonic maps (e.g. \cite{WX} \cite{CJWZ}) and weakly harmonic maps into pseudo-Riemannian manifolds \cite{Zhu}.

\subsection{Statement of Results} 

In this paper we will show improved regularity along with a new estimate. In order to get this estimate we use the Coulomb gauge obtained in \cite{riviere_struwe}, Theorem \ref{old} and we crucially require an extension of a result of Adams \cite{adams_riesz} Theorem \ref{Adams}.

\begin{theorem}\label{Main}
For $n\geq 2$ let $u\in W^{1,2}(B_1,\R^m)$ with $\D u \in \M^{2,n-2}(B_1,\R^m)$, $\Om \in \M^{2,n-2}(B_1,so(m)\otimes \wedge^1 \R^n)$ and $f \in L^p(B_1)$, for $\fr{n}{2}<p<n$, weakly solve
\begin{equation*}
-\Dl u = \Om.\D u + f.
\end{equation*} 
Then for any $U\cemb B_1$ there exist $\epsilon = \epsilon(n,m,p)>0$ and $C=C(n,m,p,U)>0$ such that whenever $\|\Om\|_{\M^{2,n-2}(B_1)} \leq \epsilon$ we have 
\begin{equation*}
\|\D^2 u\|_{\M^{\fr{2p}{n}, n-2}(U)} + \|\D u\|_{\M^{\fr{2p}{n-p},n-2}(U)}  \leq C (\|u\|_{L^1(B_1)} + \|f\|_{L^p(B_1)}).
\end{equation*}
\end{theorem}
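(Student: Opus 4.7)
The plan is to combine three ingredients: the Morrey Coulomb gauge produced in \cite{riviere_struwe} (appendix \ref{hodge}), the a priori H\"older regularity $u \in C^{0,\gamma}_{loc}$ with $\gamma = 2 - n/p$ granted by Theorem \ref{old}, and the extended Adams-type Riesz potential estimate on Morrey spaces (Theorem \ref{Adams}). The latter can be thought of as sharpening the embedding $I_1 : L^p(B_1) \to \M^{2p/(n-p), n-2}$ for $n/2<p<n$, which is what is ultimately responsible for the exponent $2p/(n-p)$ appearing in the conclusion. First apply the Coulomb gauge to obtain $P\in W^{1,2}(B_1, SO(m))$ with $\D P\in \M^{2,n-2}$ and $\dv \Om_P = 0$, where $\Om_P := P^{-1}\D P + P^{-1}\Om P$, together with the bound $\|\D P\|_{\M^{2,n-2}(B_1)} + \|\Om_P\|_{\M^{2,n-2}(B_1)} \le C\|\Om\|_{\M^{2,n-2}(B_1)}$. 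In this gauge, the PDE becomes
\begin{equation*}
\dv(P^{-1}\D u) = -\!\ast\![\Om_P \wedge \ast P^{-1}\D u] + P^{-1}f.
\end{equation*}

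Next, by Hodge theory there is an antisymmetric two-form $\xi$ with $\Om_P = d^*\xi$ and $\|\D \xi\|_{\M^{2,n-2}(B_1)} \le C\|\Om\|_{\M^{2,n-2}(B_1)}$. The identity $\Om_P = d^*\xi$ allows, upon convolution with the Newtonian kernel $G$ and use of a suitable cut-off on $U\cemb B_1$, the transfer of one derivative from the critical factor onto $G$, producing a pointwise representation of the derivatives of $P^{-1}u$ by Riesz potentials $I_\alpha$ ($\alpha=1,2$) of products lying either in $\M^{1,n-2}$, in $L^p$, or in $\M^{2,n-2}\cdot\M^{2,n-2}$-type bilinear classes. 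Applying Theorem \ref{Adams} to each piece: for the inhomogeneity $f \in L^p$, one obtains $I_1 f \in \M^{2p/(n-p),n-2}(U)$; for the critical bilinear product, combined with the gauge-induced integration by parts, the corresponding Riesz-potential bound leaves a coefficient $\|\Om\|_{\M^{2,n-2}(B_1)} \le \eps$ multiplying the sought-for norm of $\D u$ on the right-hand side. Choosing $\eps$ small enough absorbs this term into the left, yielding
\begin{equation*}
\|\D u\|_{\M^{2p/(n-p), n-2}(U)} \le C(\|u\|_{L^1(B_1)} + \|f\|_{L^p(B_1)}).
\end{equation*}

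Plugging this improved integrability of $\D u$ back into the PDE via H\"older's inequality gives $\Om \cdot \D u\in \M^{2p/n,n-2}$, hence $\Dl u \in \M^{2p/n, n-2}$, and the standard Calder\'on-Zygmund estimate on Morrey spaces (appendix \ref{Morrey}) delivers $\D^2 u\in \M^{2p/n, n-2}$ with the required bound. The main obstacle is the absorption step: naively the product $\Om\cdot \D u$ lies only in $\M^{1,n-2}$, on which the usual Adams inequality maps back into $\M^{2,n-2}$ with no real improvement, matching the starting regularity; the Coulomb identity $\Om_P = d^*\xi$, whose very existence relies on the antisymmetry of $\Om$, is precisely what permits the extra derivative to be routed through $G$ and then fed into Theorem \ref{Adams}, bridging the integrability gap and enabling the smallness of $\|\Om\|_{\M^{2,n-2}(B_1)}$ to be exploited effectively.
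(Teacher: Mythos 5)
The proposal correctly identifies the three ingredients (Coulomb gauge, Theorem \ref{old}, and Theorem \ref{Adams}), but the central ``absorption'' step contains a genuine gap and, as written, the argument does not close.

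The first problem is that the absorption is circular: to write $\|\D u\|_{\M^{2p/(n-p),n-2}(U)}\le \eps\,C\|\D u\|_{\M^{2p/(n-p),n-2}}+\dots$ and subtract, you must already know that $\|\D u\|_{\M^{2p/(n-p),n-2}}<\infty$, which is exactly what you are trying to prove. More importantly, there is no obvious bootstrap to bridge the gap: starting from the hypothesis $\D u\in \M^{2,n-2}$, H\"older gives $\Om.\D u\in \M^{1,n-2}$, and Corollary \ref{adams1} (or Theorem \ref{Adams}) maps $\M^{1,n-2}$ under $A_1$ only into $\M^{(2,\infty),n-2}$ --- strictly \emph{worse} than where you began. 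The Coulomb gauge and the Hardy-space cancellation do not by themselves break this scale-criticality; even with $\Om_P=\dv\eta$, the bilinear term is still at critical scaling and $A_1$ returns nothing better than weak $L^2$ on the $n-2$ Morrey scale.

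What your sketch misses is \emph{how} the H\"older continuity from Theorem \ref{old} is actually exploited. In the paper it is not merely an a priori smoothness input: the Coulomb-gauge/Hodge decomposition is used to derive a \emph{decay} estimate (Proposition \ref{decay1}) of the form
$\|\D u\|_{L^2(B_r)}^2 \le C(\dl)\big(\|\Om\|_{\M^{2,n-2}}^2[u]_{C^{0,\gamma}}^2+\|f\|_{L^p}^2\big)+(1+\dl)r^n\|\D u\|_{L^2(B_1)}^2$,
where the factor $[u]_{C^{0,\gamma}}^2$ arises because $u$ enters as a \emph{multiplier} of the Hardy-space terms $\langle\dv\eta,\ed P^{-1}\rangle$ and of $(\ed P^{-1})u$, and Corollary \ref{adams} ($h^1\cap \M^{1,n-2}\hookrightarrow H^{-1}$) controls those terms in $H^{-1}$. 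When this decay estimate is rescaled to $B_R(x_0)$, the H\"older seminorm contributes $R^{2\gamma}$, which iterates (Proposition \ref{main}) to give $\D u\in\M^{2,n-2+2\gamma}_{loc}$ --- the same $L^2$ integrability but a strictly larger Morrey exponent. Only \emph{after} this does H\"older's inequality give $\Om.\D u\in\M^{1,n(1-1/p)}$ with $n(1-1/p)>n-2$, so that $A_1$ finally produces $\D u\in\M^{(n/(n-p),\infty),n(1-1/p)}$ with $n/(n-p)>2$, and the integrability bootstrap $s_k\uparrow 2p/n$ can run. The quantitative estimate then requires a separate scaling argument together with the absorption lemma (Lemma \ref{LS}), precisely because one must avoid the circularity noted above and keep the constants uniform along the bootstrap. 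To repair your argument you would need to insert the decay estimate (with the $[u]_{C^{0,\gamma}}$ multiplier and the scaling $[\hat u]_{C^{0,\gamma}}=R^\gamma[u]_{C^{0,\gamma}}$) as the mechanism that upgrades the Morrey exponent before any gain in integrability is attempted, and then carry out the iterative absorption carefully rather than in one stroke.
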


We see that this generalises \cite{Sh_To} to higher dimensions, and that if $\D u \in \M^{\fr{2p}{n-p},n-2}$ then $ u \in C^{0,\gamma}$ with $\gamma$ as in Theorem \ref{old}. Moreover, with $p$ in the above range we have $\fr{2p}{n} >1$ and $\fr{2p}{n-p}>2$ - i.e. we have obtained integrability above the critical level. An interesting question here is whether the integrability of $\D u$ can be improved further when $n\geq 3$. One might expect that we should have estimates on $\D u$ in $L^{\fr{np}{n-p}}$ (consider the case $\Om \equiv 0$). Clearly the case $n=2$ is no problem as this gives the optimal regularity expected, moreover we have found solutions with $f\equiv  0$ that are not in $W^{2,2}$ or even $W^{2,(2,\infty)}$. We leave it to the reader to check that defining $u:B_{e^{-1}} \to \R^2$, by $u(x,y): = (\log r)^2\cvec{x}{y}$ with $r = (x^2+y^2)^{\fr12}$ and
$$\Om = \fr{2(1+2\log r)}{(r\log r)^2}\twomat{0}{1}{-1}{0} (-y \ed x + x\ed y)$$gives the desired example - see \cite[Chapter 4.3]{thesis}. Thus we cannot expect that $\D u \in L^{\infty}$ or even $\D u \in BMO$ in general. This also explains the range of $f$ that we consider.

\begin{remark}
Since writing this paper there have been a few generalisations and extensions of Theorem \ref{Main}, namely Roger Moser \cite{moser_regularity} has studied almost harmonic maps: functions $u:B_1\to \Nn\emb \R^m$ solving
$$-\Dl u = A(u)(\D u, \D u) + f$$ with $f\in L^p$, $p>1$ and has obtained an appropriate $W^{2,p}\cap W^{1,2p}$ estimate on such $u$ when $\|\D u\|_{M^{2,n-2}}$ is small. One would expect this result to hold for the system under consideration in this paper, under the further condition that $|\Om|\leq C|\D u|$.  

Armin Schikorra \cite{schikorra_higher} has proved that an analogue of Theorem \ref{Main} holds for more general systems involving non-local operators, using  different techniques.

Finally, in a joint work with Miaomiao Zhu \cite{ben_zhu}, the author has studied the boundary regularity problem for similar systems in order to obtain a full regularity theory in the free boundary problem for Dirac harmonic maps from surfaces.   
\end{remark}

An easy consequence of Theorem \ref{Main} is the following

\begin{cor}\label{highint}
Let $u$ and $\Om$ be as in Theorem \ref{Main} with $f \equiv 0$. 
For any $q < \infty$ and $U\cemb B_1$, setting $s=\fr{2q}{2+q}<2$, there exist  $\epsilon = \epsilon(q,m,n)>0$ and $C= C(q,m,n,U)$ such that if $\|\Om\|_{\M^{2,n-2}(B_1)} \leq \epsilon$ then
\begin{equation*}
\|\D^2 u\|_{\M^{s,n-2}(U)} + \|\D u\|_{\M^{q,n-2}(U)} \leq C\|u\|_{L^1(B_1)}.
\end{equation*}
\end{cor}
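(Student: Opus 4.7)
The plan is to obtain the corollary as a direct specialisation of Theorem \ref{Main}, followed by a trivial embedding argument to cover small $q$. Given $q$, I solve the equation $\frac{2p}{n-p} = q$ for $p$, obtaining $p = \frac{nq}{q+2}$. One checks that $p < n$ always and that $p > \frac{n}{2}$ precisely when $q > 2$. Moreover, with this choice,
\begin{equation*}
\frac{2p}{n} \;=\; \frac{2q}{q+2} \;=\; s,
\end{equation*}
so the two Morrey exponents appearing in Theorem \ref{Main} line up exactly with those in the corollary. Since $f\equiv 0$, the right-hand side in Theorem \ref{Main} reduces to $C\|u\|_{L^1(B_1)}$, and the smallness threshold $\epsilon$ and constant $C$, which in Theorem \ref{Main} depend on $(n,m,p)$ and $(n,m,p,U)$, become functions of $(q,m,n)$ and $(q,m,n,U)$ respectively once $p$ is written in terms of $q$. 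This gives the conclusion immediately for all $q > 2$.

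For the remaining range $q \le 2$, I would simply apply the case already established to some fixed $q_0 > 2$ (say $q_0 = 3$), obtaining $\D u \in \M^{q_0,n-2}(U)$ and $\D^2 u \in \M^{2q_0/(q_0+2),n-2}(U)$ with the desired dependence of constants. Because $U \cemb B_1$ is bounded and $n-2 \le n$, Hölder's inequality on each ball $B_r(x)\cap U$ yields the embedding $\M^{p_1,n-2}(U) \hookrightarrow \M^{p_2,n-2}(U)$ whenever $p_1 > p_2$ (this is the standard Morrey embedding recorded in Appendix \ref{Morrey}). Applying this with $p_2 = q$ and with $p_2 = s = \frac{2q}{q+2}$, one transfers the estimates at $q_0$ to the desired estimates, absorbing the resulting volume factors into the constant.

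There is no substantive obstacle: the corollary is essentially the statement of Theorem \ref{Main} reparameterised by the target integrability exponent of $\D u$ rather than by the source exponent of $f$, with the vacuous hypothesis $f\equiv 0$ removing the $L^p$ term from the estimate. The only mild subtlety is that the admissible range $p \in (\frac{n}{2},n)$ translates to $q > 2$, which is why the small-$q$ case must be handled separately by the embedding step above; this explains the absence of a lower bound on $q$ in the statement.
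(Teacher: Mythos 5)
Your proposal is correct and is exactly the specialisation the paper intends when it calls the corollary an ``easy consequence'' of Theorem \ref{Main}: one sets $p=\fr{nq}{q+2}\in(\fr n2,n)$ for $q>2$ so that $\fr{2p}{n-p}=q$ and $\fr{2p}{n}=\fr{2q}{q+2}=s$, and drops the $f$ term. Your treatment of $q\le 2$ via the bounded-domain Morrey embedding (apply the theorem at a fixed $q_0>2$, then lower the exponent by H\"older) is also sound and fills in the only point the paper leaves unsaid.
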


\begin{remark}
In the case that $|\Om| \leq C|\D u|$ (as is the case for Harmonic maps) this automatically gives that when $\|\D u\|_{\M^{2,n-2}(B_1)}$ is small enough then $u \in W^{2,q}$ for some $q>n$ yielding $u \in C^{1,\gamma}$ for some $\gamma \in (0,1)$. If we knew furthermore that $\Om$ depended on $u$ and $\D u$ in a smooth way (as is  also the case for Harmonic maps) then we could immediately conclude smoothness by a simple bootstrapping argument using Schauder theory. Thus we recover a proof of the regularity of weakly stationary harmonic maps into Riemannian manifolds (away from a singular set $S$ with $H^{n-2}(S) = 0$). We also mention that passing to the smooth local estimates for harmonic maps also easily follows - we leave the details to the reader. 
\end{remark}

\begin{remark}
We remark that Theorem \ref{Main} and Corollary \ref{highint} should hold (with some added technicalities) given any smooth metric $g$ on $B_1$, with $u$, $\Om$ and $f$ as above weakly solving 
\begin{equation}\label{h}
-\Dl_g u = \langle \Om,\D u\rangle_g +f.
\end{equation} 
Following \cite{riviere_inventiones} or \cite{riviere_struwe} one can check that a harmonic map $u:(B_1,g)\to \Nn\emb \R^m$ solves \eqref{h} with $f\equiv 0$, and therefore the regularity theory for general domains  would follow from this. 
\end{remark}
The method we use to prove Theorem \ref{Main} is (for the most part) broadly the same as that employed in \cite{Sh_To}, the real difference comes in section \ref{pdecay1} where we obtain a decay type estimate \eqref{d1} using both Hardy and Morrey space methods via a slight improvement of a result of Adams \cite{adams_riesz}. 

To illustrate the requirement for an improved Adams result consider the case that $\Om$ is divergence free, i.e. that $\Om = \ast \ed \xi$ for some $\xi\in W_0^{1,2}(B_1,so(m)\otimes \wedge^{n-2}\R^n)$. Using H\"older's inequality and the results in \cite{clms} we have
$$-\Dl u = \ast (\ed \xi \wedge \ed u) \in \h^1\cap M^{1,n-2}$$
after extending $\xi$ by zero and extending $u$ to $W^{1,2}(\R^n)$ appropriately. This follows from an important result of Coiffman et al, \cite{clms} that (in particular) given two one forms $D,E \in L^2(\R^n,\bigwedge^1 \R^n)$ such that $\ed E =0$ and $\dv D = 0$ weakly, then $\langle E,D\rangle \in \h^1(\R^n)$ with $\|\langle E,D\rangle\|_{\h^1(\R^n)} \leq C\|E\|_{L^2(\R^n)}\|D\|_{L^2(\R^n)}.$
The key to attaining sub-critical integrability lies in finding better decay for the $L^2$ norm of $\D u$ on small balls, thus what we would like to do is estimate $\|\D u \|_{L^2}$. When $n=2$ we have that $\h^1\emb H^{-1}= (W_0^{1,2})^{\ast}$, thus getting such an estimate is not a problem. In higher dimensions one certainly does not have $\h^1\emb H^{-1}$, however by slightly improving a result of Adams \cite{adams_riesz} we are able to prove that $\h^1\cap M^{1,n-2}\emb H^{-1}$ and we can estimate $\|\D u\|_{L^2}$. Actually we prove a local version: $h^1\cap M^{1,n-2}\emb H^{-1}(K)$ - Corollary \ref{adams}. By re-writing with respect to a Coulomb gauge, and using the H\"older continuity already obtained in Theorem \ref{old}, we are able to essentially reduce to the above situation. Indeed, the anti-symmetry of $\Om$ can be thought of as replacing the condition that it is divergence free.  

Before we state the result we first introduce some notation. Let $N[g]$ be the Newtonian potential of a function $g$, that is convolution by the fundamental solution of the Laplacian $\Gamma:\R^n\sm\{0\}\to \R$. We have $\Gamma(x) = C(n)|x|^{2-n}$ for $n\geq 3$, where $C(n)$ is a dimensional constant, and $\Gamma(x) = -\fr1{2\pi}\log|x|$ when $n=2$. Thus $N[g] = \Gamma \ast g$ and $-\Dl N[g] = g$. We will need to estimate the operation $g\mapsto \D N[g]$ which is a convolution operator given by $\D \Gamma \ast g$.

For $0<\al<n$, define $A_{\al}$ to be convolution of a function by $a_{\al}$ where $a_{\al}$ is homogeneous of degree $\al -n$ and is smooth as a function on the sphere $S^{n-1}$ with $\|a_{\al}\|_{C^1(S^{n-1})}\leq C(n)$. For instance $N$ is of the form $A_2$ when $n\geq 3$, moreover the operator defined by taking a derivative of the Newtonian potential $\D N[g]$ is of the form $A_{1}$ for all $n\geq 2$. The $A_{\al}$ are essentially Riesz potentials. 

A few words of warning are necessary here: our notation differs from that used in \cite{adams_riesz}, in particular we define our maximal functions $M_{\beta}$ differently and the Riesz potentials we consider are slightly more general. 

The  next theorem is a replacement of a weak $L^q$-estimate given by Proposition 3.2 in \cite{adams_riesz}. 
We replace the maximal function $M[g]$ by $g_{\ast}$ thereby allowing us to estimate functions in the (local) Hardy space. 
\begin{theorem}\label{Adams}
Let $0\leq \beta <n$, $0<\al p < n-\beta$ and $g\in \M^{p,\beta}(\R^n)$ we have 
$$|A_{\alpha}[g](x)|\leq C(\al, \beta, n,p) (M_{\beta}[g^p](x))^{\fr{\al}{n-\beta}}(g_{\ast}(x))^{\fr{n-\beta-\al p}{n-\beta}},$$
with $C(\al, \beta, n, p) \leq   C(n) \sup\left\{\fr{1}{1-\left(\fr12\right)^{\al}},\fr{1}{1-\left(\fr12\right)^{\al-\fr{n-\beta}{p}}} \right\}.$
\end{theorem}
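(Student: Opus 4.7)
The plan is to prove the pointwise estimate by the Hedberg--Adams splitting strategy. Write
\begin{equation*}
A_{\al}[g](x) \;=\; \int_{|z|<\dl} a_{\al}(z)\,g(x-z)\,dz \;+\; \int_{|z|\geq \dl} a_{\al}(z)\,g(x-z)\,dz,
\end{equation*}
for $\dl>0$ to be chosen. The far piece will be controlled by $M_{\beta}[g^p](x)$ using the Morrey hypothesis, the near piece by $g_{\ast}(x)$ via a dyadic decomposition adapted to the definition of the refined maximal function, and $\dl$ will be optimised to balance the two bounds. This yields precisely the geometric interpolation between $M_\beta[g^p]$ and $g_\ast$ asserted by the theorem.

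For the far piece, decompose the region $\{|z|\geq\dl\}$ dyadically into annuli $\{2^k\dl \leq |z|<2^{k+1}\dl\}$ for $k\geq 0$. On each annulus the homogeneity of $a_{\al}$ gives $|a_{\al}|\leq C(n)(2^k\dl)^{\al-n}$, while H\"older's inequality together with the Morrey bound yields $\int_{B_{2^{k+1}\dl}(x)}|g|\leq C(n)(2^k\dl)^{n-(n-\beta)/p}(M_{\beta}[g^p](x))^{1/p}$. The resulting geometric series in $k$ has ratio $2^{\al-(n-\beta)/p}<1$ by the assumption $\al p<n-\beta$, and sums to a multiple of $\dl^{\al-(n-\beta)/p}(M_{\beta}[g^p](x))^{1/p}$ with constant $C(n)/(1-2^{\al-(n-\beta)/p})$.

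The crux is the near piece, where Adams's original argument would use the Hardy--Littlewood maximal function; here we must instead exploit the Hardy-space information encoded by $g_{\ast}$. Fix a Littlewood--Paley partition function $\eta\in C_c^\infty((1/2,2))$ with $\sum_{k\geq 0}\eta(2^k s)=1$ for $s\in(0,1)$, and set $\psi_k(z):=\eta(2^k|z|/\dl)$ and $t_k:=2^{-k+1}\dl$, so that $\sum_{k\geq 0}\psi_k = 1$ on $B_\dl\sm\{0\}$ with each $\psi_k$ supported where $|z|/t_k\in (1/4,1)$. A direct rescaling computation using that $a_{\al}$ is homogeneous of degree $\al-n$ with $\|a_{\al}\|_{C^1(S^{n-1})}\leq C(n)$ shows that the function
\begin{equation*}
\Phi(w) \;:=\; C(n)^{-1}\, a_{\al}(w)\, \eta(2|w|)
\end{equation*}
is \emph{independent} of $k$, lies in $C_c^\infty(B_1)$, and satisfies $\|\D\Phi\|_{L^\infty}\leq 1$ for an appropriate dimensional constant. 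Moreover $\int a_\al(z)\psi_k(z)g(x-z)\,dz = C(n)\, t_k^\al\,(\Phi_{t_k}\ast g)(x)$. Taking $g_{\ast}$ to be defined with respect to this $\Phi$ (a legitimate choice since admissible test functions yield equivalent refined maximal functions, see \cite{semmes_primer}), each dyadic term is bounded by $C(n)t_k^\al g_{\ast}(x)$, and summing the geometric series $\sum_{k\geq 0} t_k^\al \leq C(n)\dl^\al/(1-2^{-\al})$ (convergent because $\al>0$) produces
\begin{equation*}
\biggl|\int_{|z|<\dl} a_{\al}(z) g(x-z)\,dz\biggr| \leq \fr{C(n)}{1-2^{-\al}}\,\dl^\al\, g_{\ast}(x).
\end{equation*}

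Combining the two bounds and choosing $\dl$ so that $\dl^{(n-\beta)/p}=(M_{\beta}[g^p](x))^{1/p}/g_{\ast}(x)$ balances the two contributions and produces the desired pointwise estimate; the resulting constant is $C(n)$ times the maximum of the two geometric-series denominators, matching the form asserted in the statement. The main obstacle is engineering the near-piece decomposition so that a single model test function $\Phi\in C_c^\infty(B_1)$ serves at every dyadic scale --- this relies on the homogeneity of $a_{\al}$ to collapse all $k$-dependence into the scale $t_k$ and on the hypothesis $\|a_{\al}\|_{C^1(S^{n-1})}\leq C(n)$ to deliver the Lipschitz bound required by the definition of $g_{\ast}$.
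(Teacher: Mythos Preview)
Your proof is correct and follows essentially the same strategy as the paper: a dyadic Hedberg--Adams splitting, using homogeneity of $a_{\al}$ to build a single test function $\Phi\in C_c^{\infty}(B_1)$ that realises every near-piece shell as a rescaling $\Phi_{t_k}\ast g$, the Morrey bound on dyadic annuli for the far piece, and optimisation of $\dl$. The only notable difference is that the paper first reduces to the case $p=1$ via H\"older (writing $M_{\hat\beta}[g]\leq C M_{\beta}[g^p]^{1/p}$ with $n-\hat\beta=(n-\beta)/p$) and then runs the argument, whereas you work directly with general $p$ by inserting H\"older into the far-piece annular estimate; your route is marginally more streamlined but the content is identical.
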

As alluded to above we have the following
\begin{cor}\label{adams}
Let $g\in h^1\cap M^{1,n-2}(\R^n)$, then for any compact $K\In \R^n$ there exists some $C=C(K,n)$ such that 
$$\|g\|_{H^{-1}(K)}\leq C \|g\|_{\M^{1,n-2}}^{\fr12}\|g\|_{h^1}^{\fr12}.$$
\end{cor}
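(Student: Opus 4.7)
The plan is to dualize and bound $\|g\|_{H^{-1}(K)}$ by $\|\D N[g]\|_{L^2(K)}$, then control the latter via the pointwise estimate of Theorem \ref{Adams} integrated over $K$.

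First reduce to compactly supported $g$. Pick $\chi\in C_c^\infty(\R^n)$ with $\chi\equiv 1$ on an open neighbourhood of $K$ and set $\tilde g := \chi g$. Multiplication by a smooth compactly supported function preserves both $h^1$ and $M^{1,n-2}$, so $\|\tilde g\|_{h^1}+\|\tilde g\|_{M^{1,n-2}}\leq C(K)(\|g\|_{h^1}+\|g\|_{M^{1,n-2}})$; since $\chi\equiv 1$ on $\text{supp}\,\varphi$ for every $\varphi\in C_c^\infty(K)$, we also have $\la g,\varphi\ra=\la\tilde g,\varphi\ra$, so it suffices to prove the bound with $\tilde g$ in place of $g$. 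Henceforth assume $g$ has compact support.

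For $\varphi\in C_c^\infty(K)$, the identity $-\Dl N[g]=g$ and integration by parts (valid since $\D N[g]\in L^1_{loc}$ and $\varphi$ is compactly supported smooth) give
$$\int g\,\varphi \;=\; \int \D N[g]\cdot \D\varphi,$$
whence $\|g\|_{H^{-1}(K)}\leq \|\D N[g]\|_{L^2(K)}$ by Cauchy--Schwarz. Since $\D N[g]$ is of the form $A_1[g]$, Theorem \ref{Adams} applied with $\al=1$, $\beta=n-2$, $p=1$ (so $\al p=1<2=n-\beta$ and both output exponents equal $\fr12$) yields, pointwise in $x$,
$$|A_1[g](x)|^2 \;\leq\; C\,M_{n-2}[|g|](x)\,g_*(x) \;\leq\; C\,\|g\|_{M^{1,n-2}}\,g_*(x).$$
Integrating this over $K$ reduces the matter to showing $\|g_*\|_{L^1(K)}\leq C(K)\,\|g\|_{h^1}$.

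The only delicate point is the mismatch between the global Hardy maximal function $g_*$ appearing in Theorem \ref{Adams} and the local version $g_{\tilde *}$ that actually controls $\|g\|_{h^1}$; this is where the compact support of $g$ is crucial. For any $t\geq 1$ one has $|\phi_t*g(x)|\leq t^{-n}\|\phi\|_\infty\|g\|_{L^1}\leq C\|g\|_{L^1}$, so $g_*(x)\leq g_{\tilde *}(x)+C\|g\|_{L^1}$. Integrating over $K$ and using $\|g_{\tilde *}\|_{L^1(\R^n)}=\|g\|_{h^1}$ together with the embedding $h^1\emb L^1$ delivers $\|g_*\|_{L^1(K)}\leq C(K)\,\|g\|_{h^1}$. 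Chaining the three estimates gives $\|g\|_{H^{-1}(K)}^2\leq C(K)\,\|g\|_{M^{1,n-2}}\|g\|_{h^1}$, which is the claimed inequality.
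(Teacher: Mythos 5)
Your proof is correct, and it takes a genuinely different route to the key estimate. Both arguments end with the same duality step (testing against $\varphi\in C_c^\infty(K)$, using that $\D N[g]$ is of type $A_1$, and invoking Theorem \ref{Adams} with $\al=1$, $p=1$, $\beta=n-2$ to get the $L^2(K)$ bound). Where they differ is in bridging the gap between the \emph{global} maximal function $g_*$ appearing in Theorem \ref{Adams} and the \emph{local} $h^1$ norm. The paper invokes the Semmes characterization \cite[Theorem 1.92]{semmes_primer}, decomposing $g$ near $K$ as $\psi(g-\lambda)+\lambda\psi$ with $\psi(g-\lambda)\in\mathcal H^1$ and $\lambda\psi\in L^\infty_c$, treating each piece separately. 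You instead observe directly that for $t\geq 1$ one has $|\phi_t\ast g(x)|\leq t^{-n}\|\phi\|_\infty\|g\|_{L^1}$, hence pointwise $g_*\leq g_{\ti\ast}+C\|g\|_{L^1}$, so that integrating over the compact $K$ and using $h^1\emb L^1(\R^n)$ gives $\|g_*\|_{L^1(K)}\leq C(K)\|g\|_{h^1}$. This is a cleaner, more elementary observation that avoids the Semmes result entirely. Note also that your initial cutoff to compactly supported $g$ is in fact superfluous: the step you flag as ``crucial'' only needs $g\in L^1(\R^n)$, which already follows from $g\in h^1$; keeping the cutoff is harmless, but you could delete that paragraph and apply the argument to $g$ directly.
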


Theorem \ref{Adams} also recovers the following potential estimates, due to Adams \cite{adams_riesz}. We provide a proof of the following because we wish to keep track of the constants more closely.
\begin{cor}
\label{adams1}
Let $0\leq \beta < n$, $0<\al p<n-\beta$ and $\fr1{\tilde{p}} = \fr1p - \fr{\al}{n-\beta}$.
\begin{enumerate}\item When $p>1$ we have that 
$A_{\al}:\M^{p,\beta}(\R^n) \tu \M^{\ti{p},\beta}(\R^n)$ 
is bounded. Moreover there exists 
$$C(n,p,\al,\beta)\leq C(n) \left(\fr{p}{p-1}\right)^{\fr{1}{\ti{p}}}\sup\left\{\fr{1}{1-\left(\fr12\right)^{\al}},\fr{1}{1-\left(\fr12\right)^{\al-\fr{n-\beta}{p}}} \right\}$$ such that $\|A_{\al}[g]\|_{\M^{\ti{p},\beta}(\R^n)} \leq C(n,p,\al,\beta) \|g\|_{\M^{p,\beta}(\R^n)}.$
\item When $p=1$ we have that $A_{\al}:\M^{1,\beta}(\R^n) \tu \M^{(\ti{p},\infty),\beta}(\R^n)$ is bounded.
Moreover there exists 
$$C(n,\al,\beta)\leq C(n) \sup\left\{\fr{1}{1-\left(\fr12\right)^{\al}},\fr{1}{1-\left(\fr12\right)^{\al-(n-\beta)}} \right\}$$ such that $\|A_{\al}[g]\|_{\M^{(\ti{p},\infty),\beta}(\R^n)} \leq C(n,\al,\beta) \|g\|_{\M^{p,\beta}(\R^n)}.$

\end{enumerate}
\end{cor}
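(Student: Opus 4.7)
The plan is to deduce both bounds directly from the Hedberg-type pointwise estimate of Theorem \ref{Adams} combined with classical mapping properties of the Hardy--Littlewood maximal operator on Morrey spaces. First, since $\phi \in C_c^\infty(B_1)$ with $\|\D\phi\|_{L^\infty}\leq 1$ one has the pointwise domination $g_\ast(x) \leq C(n) M[g](x)$, and the definition of the Morrey norm gives the trivial bound $M_\beta[|g|^p](x) \leq \|g\|_{M^{p,\beta}}^p$. Feeding both into Theorem \ref{Adams} converts its conclusion into the Hedberg-type inequality
\begin{equation*}
|A_\alpha[g](x)| \leq C_0 \, \|g\|_{M^{p,\beta}}^{p\alpha/(n-\beta)} \bigl(M[g](x)\bigr)^{1 - p\alpha/(n-\beta)},
\end{equation*}
where $C_0$ absorbs both $C(n)$ and the supremum factor $\sup\{(1-2^{-\alpha})^{-1}, (1-2^{-(\alpha-(n-\beta)/p)})^{-1}\}$ from Theorem \ref{Adams}.

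For part (1), I would raise this bound to the power $\ti p$. The algebraic identity $\ti p\bigl(1 - p\alpha/(n-\beta)\bigr) = p$, equivalent to the defining relation $1/\ti p = 1/p - \alpha/(n-\beta)$, collapses the right-hand side to $C_0^{\ti p}\|g\|_{M^{p,\beta}}^{\ti p - p}(M[g])^p$. Averaging on $B_r(x_0)$ and applying the Chiarenza--Frasca boundedness of $M$ on $M^{p,\beta}$ (with operator norm of order $p/(p-1)$) then produces the strong bound after taking $\ti p$-th roots. To obtain the sharp factor $(p/(p-1))^{1/\ti p}$ rather than the naive $(p/(p-1))^{p/\ti p}$, one should carry out this last step via Marcinkiewicz interpolation: interpolate between the weak endpoint provided by part (2) (for an exponent $p_0 \in (1,p)$ approaching $1$) and the trivial control at the other end, so that the endpoint constants enter with the correct interpolation exponent $1/\ti p$.

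For part (2), with $p=1$, the same pointwise estimate reduces the task to a level-set estimate for $M[g]$. Setting $a = \alpha/(n-\beta)$, one has the inclusion
\begin{equation*}
\{x \in B_r(x_0) : |A_\alpha[g](x)| > \lambda\} \subset \bigl\{x \in B_r(x_0) : M[g](x) > c\,\lambda^{1/(1-a)}\|g\|_{M^{1,\beta}}^{-a/(1-a)}\bigr\},
\end{equation*}
and the weak $(1,1)$ Chiarenza--Frasca inequality for $M$ on $M^{1,\beta}$ gives the measure of the right-hand side to be at most $C\lambda^{-\ti p}\|g\|_{M^{1,\beta}}^{\ti p} r^\beta$, which is precisely the weak Morrey bound claimed.

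The main technical obstacle is bookkeeping the constants with sufficient care, particularly in part (1): routing the step through Marcinkiewicz interpolation (rather than a direct H\"older argument on the Hedberg bound) is what produces the $(p/(p-1))^{1/\ti p}$ factor, and one must verify that the weak-type Chiarenza--Frasca inequality survives at the $p=1$ endpoint with a dimensional constant so that this interpolation closes.
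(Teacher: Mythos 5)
Your reduction to a Hedberg-type pointwise bound via $g_\ast \leq C(n)M[g]$ and $M_\beta[|g|^p] \leq \|g\|_{\M^{p,\beta}}^p$ is correct, and the algebraic identity $\ti{p}\bigl(1 - p\al/(n-\beta)\bigr) = p$ is the same collapse the paper exploits. Beyond that point, however, your route diverges from the paper's and, as written, contains a genuine gap. The paper does not invoke Chiarenza--Frasca as a black box: for each ball $B_r(x)$ it splits $g = g_r + g^r$ with $g_r = g\chi_{B_{2r}(x)}$, applies Theorem \ref{Adams} together with the \emph{classical} $L^p$ (respectively weak-$L^1$) maximal bound to the compactly supported $g_r$, and estimates $A_\al[g^r]$ on $B_r(x)$ directly by summing $|x-y|^{\al-n}$ over dyadic annuli. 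The Stein $L^p$ bound for $M$ carries the constant $C(n)(p/(p-1))^{1/p}$; raising this to the $p$-th power inside the Hedberg inequality and then extracting a $\ti{p}$-th root is precisely what produces the $(p/(p-1))^{1/\ti{p}}$ factor, with no interpolation required.

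The Marcinkiewicz detour is where your proposal breaks down. Your worry is predicated on an operator norm ``of order $p/(p-1)$'' for $M$ on $\M^{p,\beta}$, but the correct constant, obtained by the same local/global splitting (and hence inheriting the Stein $L^p$ bound), is of order $(p/(p-1))^{1/p}$; with that constant the direct computation already yields $(p/(p-1))^{1/\ti{p}}$ and no repair is needed. More seriously, the detour cannot be carried out as described: the Morrey scale $\{\M^{p,\beta}\}_{p}$ is well known not to be a real-interpolation scale, so a Marcinkiewicz argument between a weak bound near $p=1$ and some other endpoint does not recover the intermediate $\M^{p,\beta}\to\M^{\ti{p},\beta}$ estimate with a controlled constant. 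Moreover part (2) furnishes a weak bound only at $p=1$, not at a varying $p_0\in(1,p)$, and there is no admissible $L^\infty$-type endpoint for $A_\al$ acting on $\M^{p,\beta}(\R^n)$, so it is unclear what the two interpolation endpoints would even be. Your outline for part (2) is correct in spirit, but it quietly hides the same local splitting inside the weak Morrey estimate for $M$ that you cite, so it does not circumvent the work the paper performs explicitly.
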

These reduce to well known estimates when $\beta =0$.
\begin{remark}\label{const}
Setting $\al =1$ and recalling that $\D N[g]$ is an operator of the form $A_{1}$ we see that given $a,b$ with $1<a\leq p \leq b < n-\beta$ then there is a uniform constant $C=C(a,b)$ such that for $g \in \M^{p,\beta}$ then $\|\D N[g]\|_{\M^{\ti{p},\beta}}\leq C\|g\|_{\M^{p,\beta}}$ for any $p$ and $\beta$ in this range.

Furthermore, let $u$, $\Om$ and $f$ be as in \eqref{ff}. Define $g:=\Om.\D u$ and extend both $f$ and $g$ by zero to $\R^n$. Then there is some harmonic function $h:B_1\to \R^m$ with $u = N[g] + N[f] + h$, since $-\Dl (u - N[g] - N[f]) = 0$ on $B_1$. Standard estimates (using Corollary \ref{adams1}) give 
$$\|h\|_{L^1(B_1)} \leq C(\|u\|_{L^1(B_1)} + \|f\|_{L^p(B_1)} + \|\Om\|_{M^{2,n-2}(B_1)}\|\D u\|_{M^{2,n-2}(B_1)})$$ and therefore since $h$ is harmonic, for any $r<1$ 
$$\|\D h\|_{L^{\infty}(B_r)} \leq \fr{C(n)}{(1-r)^{n+1}}(\|u\|_{L^1(B_1)} + \|f\|_{L^p(B_1)} + \|\Om\|_{M^{2,n-2}(B_1)}\|\D u\|_{M^{2,n-2}(B_1)}). $$ Thus we gain local control on $\D u = \D N[g] + \D N[f] + \D h$ by estimating $\D N[g]$ and $\D N[f]$.\end{remark}

\begin{remark}
We point out that by combining Theorem \ref{Adams} and Corollary \ref{adams1} we have $A_{\al}:\h^1\cap M^{1,\beta}(\R^n)\to L^{\ti{p}}\cap M^{(\ti{p},\infty),\beta}(\R^n)$ with an estimate - the details are left to the reader. In particular, when $\beta = 0$ we have $A_{\al}:\h^1(\R^n)\to L^{\fr{n}{n-\al}}(\R^n)$ - see also \cite[Theorem 1.77]{semmes_primer}.
\end{remark}

\emph{Acknowledgments:} I was supported by The Leverhulme Trust during the completion of this work.  I would also like to thank the referee for providing invaluable advice on the presentation of this paper. 

\section{Proof of Theorem \ref{Main}}\label{pMain}

We prove Theorem \ref{Main} based on the following proposition, analogous to \cite[Lemma 7.3]{Sh_To}, the proof of which is left to section \ref{pmain}.
\begin{prop}\label{main}
Let $u$, $\Om$ and $f$ be as in Theorem \ref{Main}. There exits $\epsilon = \epsilon (n,m,p)$ such that whenever $\|\Om\|_{\M^{2,n-2}(B_1)}^2 \leq \epsilon$ then $\D u \in \M_{loc}^{2,n-2(\fr np -1)}(B_1,\R^m)$.

\end{prop}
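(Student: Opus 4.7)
Writing $\gamma:=2-n/p\in(0,1)$, the claim $\D u\in\M^{2,n-2(n/p-1)}_{\mathrm{loc}}$ is equivalent to the Campanato-type integral decay
\[
\int_{B_\rho(x_0)}|\D u|^2\;\leq\;C\rho^{\,n-2+2\gamma}\bigl(\|u\|_{L^1(B_1)}+\|f\|_{L^p(B_1)}\bigr)^2
\]
uniformly for $x_0\in U\cemb B_1$ and $\rho\in(0,r_0)$, since $n-2+2\gamma=n-2(n/p-1)$. My first move is to apply Theorem \ref{old} (shrinking $\epsilon$ if necessary) to obtain $u\in C^{0,\gamma}_{\mathrm{loc}}(B_1,\R^m)$ with a quantitative bound; the resulting oscillation estimate $\|u-u(x_0)\|_{L^\infty(B_r(x_0))}\leq Cr^\gamma$ will be the source of the extra $r^\gamma$ that upgrades the decay of $\D u$ on small balls.

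\textbf{Coulomb gauge and harmonic replacement.} Fix $x_0\in U$ and a small radius $r$. Apply the Rivi\`ere--Struwe Coulomb gauge construction (appendix \ref{hodge}) on $B_r(x_0)$ to produce $P:B_r(x_0)\to SO(m)$ with $\Om_P:=P^{-1}\ed P+P^{-1}\Om P$ coclosed and $\|\D P\|_{\M^{2,n-2}(B_r)}+\|\Om_P\|_{\M^{2,n-2}(B_r)}\leq C\|\Om\|_{\M^{2,n-2}(B_r)}$, so that $\Om_P=\ast\ed\xi$ on the ball. Split $u=h+v$ on $B_r(x_0)$ into the harmonic extension $h$ of $u|_{\partial B_r}$ and a Dirichlet piece $v\in W^{1,2}_0(B_r)$ solving $-\Dl v=\Om.\D u+f$. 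The harmonic part contributes the favourable $(\rho/r)^n\int_{B_r}|\D u|^2$ to $\int_{B_\rho}|\D u|^2$, so the problem reduces to bounding $\|\D v\|_{L^2(B_r)}^2\leq Cr^{\,n-2+2\gamma}(\cdots)^2$.

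\textbf{Decay estimate via Hardy and Adams.} Set $\tilde u:=u-u(x_0)$, use $\Om.\D u=\Om.\D\tilde u$, and test the equation for $v$ against itself. The $f$-contribution is handled by Calder\'on--Zygmund via the Morrey embedding $L^p\emb\M^{1,n-2(n/p-1)}$, yielding precisely $r^{\,n-2+2\gamma}\|f\|_{L^p}^2$. For the $\Om$-contribution I substitute $\Om=P\Om_P P^{-1}-\ed P\cdot P^{-1}$ and analyse the two resulting pairings. Pairing the coclosed form $\Om_P=\ast\ed\xi$ against the closed 1-form $\ed\tilde u$ is of Coifman--Lions--Meyer--Semmes type and gives an $h^1\cap\M^{1,n-2}$ bound; combined with $\|\tilde u\|_{L^\infty(B_r)}\leq Cr^\gamma$ to localise, Corollary \ref{adams} converts this into an $H^{-1}$ bound and hence into an $L^2(B_r)$ estimate on the corresponding contribution to $\D v$ carrying a factor $\epsilon\cdot r^{(n-2)/2+\gamma}$. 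The $\ed P\cdot P^{-1}$ pairing is treated analogously after Hodge-decomposing $P^{-1}\ed\tilde u=\ed A+\ed^\ast B$: the $\ed A$ part pairs a closed with a closed form, but the antisymmetry of $\Om_P$ together with the identity $\ed P\cdot P^{-1}=-P\cdot\ed(P^{-1})$ lets me recast it again as a CLMS pairing of a closed and coclosed form, while the $\ed^\ast B$ part is absorbed using the standard Hodge estimate on $B$.

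\textbf{Iteration and main obstacle.} Collecting the estimates produces a hole-filling inequality
\[
\int_{B_{r/2}}|\D u|^2\;\leq\;(C2^{-n}+C\epsilon)\!\int_{B_r}|\D u|^2+Cr^{\,n-2+2\gamma}\bigl(\|u\|_{L^1(B_1)}+\|f\|_{L^p(B_1)}\bigr)^2,
\]
to which the standard Campanato iteration lemma applies once $\epsilon$ is small enough that $C2^{-n}+C\epsilon<1$. I expect the main obstacle to lie in the previous step: the Coulomb gauge leaves the non-coclosed $\ed P\cdot P^{-1}$ term in $\Om$, and extracting a genuine Jacobian/Hardy-space structure from it requires the antisymmetric algebra of $\Om_P$ to be carefully intertwined with a Hodge decomposition of $P^{-1}\ed\tilde u$. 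Balancing the resulting norms so that Corollary \ref{adams} delivers the correct $r^\gamma$ gain—rather than a plain $L^2$ bound that would merely recover Theorem \ref{old}—and so that the iteration closes, is the delicate technical heart of the proof, and is precisely the role played by the refined Adams estimate of Theorem \ref{Adams}.
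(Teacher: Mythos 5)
Your overall architecture is right: Theorem~\ref{old} to get $u\in C^{0,\gamma}$, the Coulomb gauge of Lemma~\ref{coulomb}, a CLMS/Hardy-space estimate, Corollary~\ref{adams} to land in $H^{-1}$, and a Campanato iteration via a hole-filling inequality. This is essentially the shape of the proof in the paper (via Proposition~\ref{decay1} and the iteration in Section~\ref{pmain}). However, the technical heart of your argument, as you describe it, contains a genuine gap.

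The first problem is where you claim that $(P\Om_P P^{-1}).\D\tilde u$ is a CLMS pairing of the coclosed form $\Om_P$ against the closed $1$-form $\ed\tilde u$. Unpacking the indices, $(P\Om_P P^{-1}).\D\tilde u = P\bigl(\Om_P\cdot P^{-1}\ed\tilde u\bigr)$, so the form paired against $\Om_P$ is $P^{-1}\ed\tilde u$, which is \emph{not} closed: $\ed(P^{-1}\ed u)=\ed P^{-1}\wedge\ed u\neq 0$. The conjugation by $P$ destroys the div-curl structure you want, so the CLMS theorem does not apply as stated. Relatedly, it is not clear in your write-up where the factor $\tilde u$ (and hence the $r^{\gamma}$ gain from H\"older continuity) is supposed to come from, since in a pairing $\langle\Om_P,\ed\tilde u\rangle$ the function $\tilde u$ is already used up in $\ed\tilde u$. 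The mechanism the paper actually uses is the product-rule split
\[
P^{-1}\ed u \;=\; \ed(P^{-1}u)\;-\;(\ed P^{-1})\,u .
\]
Applied to $\langle\dv\eta,P^{-1}\ed u\rangle$, this produces two pieces: $\langle\dv\eta,\ed(P^{-1}u)\rangle$, bounded in $H^{-1}$ by a direct integration by parts (no CLMS), and $\langle\dv\eta,\ed P^{-1}\rangle\,u$, which is the genuine CLMS pairing (coclosed $\dv\eta$ against closed $\ed P^{-1}$, both of small Morrey norm) multiplied by the H\"older factor $u\in L^\infty$; the product then lies in $h^1\cap M^{1,n-2}$ and Corollary~\ref{adams} applies. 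The CLMS pairing is $\langle\dv\eta,\ed P^{-1}\rangle$, not $\langle\Om_P,\ed\tilde u\rangle$.

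The second problem is your treatment of the $\ed P\cdot P^{-1}$ contribution. After Hodge-decomposing $P^{-1}\ed\tilde u=\ed A+\ed^\ast B$, you concede that $\langle\ed P,\ed A\rangle$ is closed against closed, and then assert that ``the antisymmetry of $\Om_P$'' together with $\ed P\cdot P^{-1}=-P\cdot\ed(P^{-1})$ recasts it as a CLMS pairing. This does not work: the antisymmetry of $\Om_P$ has no bearing on the term $\ed P\cdot P^{-1}$, and the identity $\ed P\cdot P^{-1}=-P\cdot\ed(P^{-1})$ only swaps one gradient for another, leaving a closed $\times$ closed pairing with no Jacobian structure. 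The paper avoids this dead end entirely: it Hodge-decomposes the rotated gradient $P^{-1}\ed u=\ed a+\dv b+h$ (Lemma~\ref{hodged}), and the ``curl'' piece $\dv b$ (which is what encodes the $\ed P^{-1}\wedge\ed u$ contribution) is estimated by a duality argument using only the orthogonality of the Hodge decomposition, the same product-rule split $P^{-1}\ed u=\ed(P^{-1}u)-(\ed P^{-1})u$, and Cauchy--Schwarz against $\|u\|_{L^\infty}$ --- no CLMS and no Corollary~\ref{adams} are needed for that piece. CLMS and the Adams estimate enter only in the single term $\langle\dv\eta,\ed P^{-1}\rangle\,u$ described above. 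If you want your write-up to close, you should replace the harmonic replacement of $u$ by the Hodge decomposition of $P^{-1}\ed u$ and insert the product-rule split explicitly; as written, the two CLMS claims you rely on are not available.
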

\begin{proof}[Proof of Theorem \ref{Main}]
This proof generalises the ideas needed in the proofs of \cite[Lemmata 7.1 and 7.2]{Sh_To} to Morrey spaces. 

Using the improved regularity from Proposition \ref{main} and the H\"older inequality we see that $\Om.\D u \in \M_{loc}^{1,n(1-\fr1p)}$. We also have $f\in M^{1,n(1-\fr1p)}$ - see appendix \ref{scal} if necessary. 

By applying Corollary \ref{adams1}  (for $\al =1$, $p =1$ and $\beta = n(1-\fr1p)$) and Remark \ref{const} we see that this implies $\D u \in \M_{loc}^{(\fr{n}{n-p},\infty),n(1-\fr1p)}$, which in turn gives, $\D u \in \M_{loc}^{\fr{\ta n}{n-p}, n(1-\fr{\ta}{p})}$ for any $\ta < 1$ - we leave the details to the reader.  

The fact that $\D u \in \M_{loc}^{\fr{\ta n}{n-p}, n(1-\fr{\ta}{p})}$ implies  $\Om.\D u \in \M_{loc}^{s,n(1-\fr{s}{p})}$ where $\fr1s=\fr12 + \fr{n-p}{\ta n}$. We can choose $\ta$ so that $s>1$ but note that we also have $s < \fr{2n}{3n-2p} < \fr{2p}{n}$ for $p \in (\fr{n}{2},n)$. 
We make the following claim: 
\begin{eqnarray*}
&\Om.\D u \in \M_{loc}^{s_k,n(1-\fr{s_k}{p})}, s_k \in (1,\fr{2p}{n})  \Rightarrow&  \\
&\Om. \D u \in \M_{loc}^{s_{k+1}, n(1-\fr{s_{k+1}}{p})}, s_k<s_{k+1}= \fr{2ns_k}{ns_k + 2(n-p)} \in (1,\fr{2p}{n}).&
 \end{eqnarray*}
To prove the claim we can easily check that $f \in \M^{s_k,n(1-\fr{s_k}{p})} $ with a uniform estimate for any such $s_k$ (see appendix \ref{scal}). Therefore we may apply Corollary \ref{adams1} (for $\al =1$, $p = s_k$ and $\beta = n(1-\fr{s_k}p)$) and Remark \ref{const} to yield $\D u \in \M_{loc}^{s_k\fr{n}{n-p},n(1-\fr{s_k}{p})}$. Again by H\"older's inequality we have $\Om.\D u \in \M_{loc}^{s_{k+1},n(1-\fr{s_{k+1}}{p})}$ where $\fr{1}{s_{k+1}}= \fr12 + \fr{n-p}{s_k n}$. 
We check that $$\fr{s_k}{s_{k+1}} = \fr{s_k}{2} + \fr{n-p}{n} < \fr{p}{n} + 1 - \fr{p}{n} =1.$$
If we assume, to get a contradiction, that $s_{k+1} \geq \fr{2p}{n}$ then we have 
$\fr{2ns_k}{ns_k + 2(n-p)} \geq \fr{2p}{n}.$
Which implies 
$2ns_k \geq 2ps_k +2(n-p)\fr{2p}{n}$
and therefore $s_k \geq \fr{2p}{n}$, a contradiction. Thus the claim holds. 

We have the recursive relation $s_{k+1}=\fr{2ns_k}{ns_k + 2(n-p)}$, so we have $s_k \uparrow \fr{2p}{n}$ and we have proved that $\Om.\D u \in \M_{loc}^{s,n(1-\fr{s}{p})}$ for all $s < \fr{2p}{n}$. Thus $\D u \in \M_{loc}^{s\fr{n}{n-p}, n(1-\fr{s}{p})}$ for all $s$ in this range (again by Corollary \ref{adams1}).

For some $B_R(x_0)\cemb B_1$ letting $\hat{u}(x) = u(x_0 + Rx)$, $\hat{\Om}(x) = R\Om(x_0+Rx)$ and $\hat{f}(x) = R^2 f(x_0 + Rx)$ we have 
$$-\Dl \hat{u} = \hat{\Om}.\D \hat{u} + \hat{f}$$ on $B_1$. The above argument gives us that $\D \hat{u} \in \M^{s\fr{n}{n-p}, n(1-\fr{s}{p})}(B_1)$ for all $s<\fr{2p}{n}$. 
 
Now let $s \in (\fr{2p+n}{2n},\fr{2p}{n})$ and denote by $t$ the next value given in the bootstrapping argument (if $s=s_k$ then $t=s_{k+1}$).  Notice that within this range, by Remark \ref{const} there is a $C=C(n,p)$ independent of $s$ and $t$ such that 
\begin{eqnarray*}
\|\D \hat{u}\|_{\M^{t\fr{n}{n-p},n(1-\fr{t}{p})}(B_{\fr12})} &\leq& C(\|\hat{\Om}\|_{\M^{2,n-2}(B_1)}\|\D \hat{u}\|_{\M^{s\fr{n}{n-p},n(1-\fr{s}{p})}(B_{1})} + \|\hat{f}\|_{L^p(B_1)} + \|\hat{u}\|_{L^1(B_1)}) \nonumber\\
&\leq& C(\|\hat{\Om}\|_{\M^{2,n-2}(B_1)}\|\D \hat{u}\|_{\M^{t\fr{n}{n-p},n(1-\fr{t}{p})}(B_{1})} +\|\hat{f}\|_{L^p(B_1)} + \|\hat{u}\|_{L^1(B_1)})\nonumber
\end{eqnarray*}
since whenever $1<s<t<\fr{2p}{n}$ we have the estimate 
$$\|\D \hat{u}\|_{\M^{s\fr{n}{n-p},n(1-\fr{s}{p})}}\leq C\|\D \hat{u}\|_{\M^{t\fr{n}{n-p},n(1-\fr{t}{p})}}$$ for $C=C(n,p)$.
Raising to the power $\mu : = t\fr{n}{n-p}$ we see that 
\begin{equation*}
\|\D \hat{u}\|_{\M^{\mu,n(1-\fr{t}{p})}(B_{\fr12})}^{\mu} \leq C(\|\hat{\Om}\|_{\M^{2,n-2}(B_1)}^{\mu}\|\D \hat{u}\|_{\M^{\mu,n(1-\fr{t}{p})}(B_{1})}^{\mu} +(\|\hat{f}\|_{L^p(B_1)} + \|\hat{u}\|_{L^1(B_1)})^{\mu}),
\end{equation*}
where we can still pick $C$ independent of $t$ since $\mu < \fr{2p}{n-p}$. 
Undoing the scaling leaves (see appendix \ref{scal})
\begin{eqnarray*}
R^{\mu + \fr{nt}{p}}\|\D u\|_{\M^{\mu,n(1-\fr{t}{p})}(B_{\fr{R}{2}}(x_0))}^{\mu} &\leq& C(\|\Om\|_{\M^{2,n-2}(B_1)}^{\mu}R^{\mu + \fr{nt}{p}}\|\D u\|_{\M^{\mu,n(1-\fr{t}{p})}(B_{R}(x_0))}^{\mu} + \\ &&+(R^{2-\fr{n}{p}}\|f\|_{L^p(B_1)} + R^{-n}\|u\|_{L^1(B_1)})^{\mu}).
\end{eqnarray*}
Since $R <1$, $\mu < \fr{2p}{n-p}$ and $t<\fr{2p}{n}$ we have that 
\begin{eqnarray*}
\|\D u\|_{\M^{\mu,n(1-\fr{t}{p})}(B_{\fr{R}{2}}(x_0))}^{\mu} &\leq& C\|\Om\|_{\M^{2,n-2}(B_1)}^{\mu}\|\D u\|_{\M^{\mu,n(1-\fr{t}{p})}(B_{R}(x_0))}^{\mu}+\\ &&+C(\|f\|_{L^p(B_1)} + \|u\|_{L^1(B_1)})^{\mu})R^{-((n+1)\fr{2p}{n-p} + 2)}.
\end{eqnarray*} 
We are now in a position to apply Lemma \ref{LS} for $\Gamma = C(\|f\|_{L^p(B_1)} + \|u\|_{L^1(B_1)})^{\mu})$, $\epsilon \leq \left(\fr{\epsilon_0}{C}\right)^{\fr{n-p}{2p}}$ and $\epsilon_0=\epsilon_0(n,k)$ is found for $k= (n+1)\fr{2p}{n-p} + 2$ to give the estimate 
\begin{equation*}
\|\D u\|_{\M^{\mu,n(1-\fr{t}{p})}(B_{\fr{1}{2}})} \leq C(\|f\|_{L^p(B_1)} + \|u\|_{L^1(B_1)})
\end{equation*}
with $C$ independent of $t$. We may now pass to the limit $t \uparrow \fr{2p}{n}$ to give
\begin{equation*}
\|\D u\|_{\M^{\fr{2p}{n-p},n-2}(B_{\fr{1}{2}})} \leq C(\|f\|_{L^p(B_1)} + \|u\|_{L^1(B_1)}).
\end{equation*}
For the second estimate, note that we have $\Om.\D u +f \in \M_{loc}^{\fr{2p}{n},n-2}$ by H\"older's inequality, thus by Theorem \ref{peetre} and the proceeding remarks we have finished the proof after applying a covering argument. 

\end{proof}

\section{Proof of Proposition \ref{main}}\label{pmain}

We begin with a proposition stating the main decay estimate required, the proof of this is left until section \ref{pdecay1}. This decay estimate is analogous to that of part 2. from \cite[Theorem 1.5]{Sh_To}, except that here we crucially require the H\"older regularity already obtained in order to prove \eqref{d1}.

\begin{prop}\label{decay1}
With the set-up as in Theorem \ref{Main}. Let $\dl>0$, then there exist $\epsilon=\epsilon(n,m,p)>0$ and $C=C(\dl,m,n)$ such that when $\|\Om\|_{\M^{2,n-2}(B_1)} \leq \epsilon$ we have the following estimate ($\gamma = 2-\fr np$)
\begin{equation}\label{d1}
\|\D u\|_{L^2(B_r)}^2 \leq C(\dl)(\|\Om\|_{\M^{2,n-2}(B_1)}^2[u]_{C^{0,\gamma}(B_1)}^2 +  \|f\|_{L^p(B_1)}^2) + r^n(1+\dl)\|\D u\|_{L^2(B_1)}^2.
\end{equation}

\end{prop}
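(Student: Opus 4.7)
The plan is to compare $u$ with its harmonic replacement on $B_1$ to extract the $r^n$-decay, and then to bound the resulting $W^{1,2}_0$-error using the Coulomb gauge of Rivi\`ere--Struwe combined with the local Adams estimate Corollary \ref{adams}. Let $h$ be the harmonic function on $B_1$ with $h|_{\partial B_1} = u|_{\partial B_1}$ and set $\psi := u - h \in W_0^{1,2}(B_1,\R^m)$. Since $|\D h|^2$ is subharmonic one has $\int_{B_r}|\D h|^2 \leq r^n \int_{B_1}|\D h|^2 \leq r^n \|\D u\|_{L^2(B_1)}^2$, and Young's inequality applied to $\D u = \D h + \D\psi$ delivers
\[
\|\D u\|_{L^2(B_r)}^2 \leq (1+\dl)\, r^n \|\D u\|_{L^2(B_1)}^2 + C(\dl)\|\D\psi\|_{L^2(B_1)}^2,
\]
so it suffices to prove the $r$-independent bound $\|\D\psi\|_{L^2(B_1)}^2 \leq C(\|\Om\|_{M^{2,n-2}(B_1)}^2 [u]_{C^{0,\gamma}(B_1)}^2 + \|f\|_{L^p(B_1)}^2)$.

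Since $-\Dl\psi = \Om.\D u + f$ with $\psi|_{\partial B_1} = 0$, I would test the equation against $\psi$ itself, obtaining $\|\D\psi\|_{L^2}^2 = \int \Om.\D u\cdot\psi + \int f\psi$. The $\int f\psi$-term is controlled by H\"older's inequality combined with the Sobolev embedding $W_0^{1,2}\emb L^{2^\ast}$ (valid since $p>n/2$ forces $p' \leq 2^\ast$), yielding a bound $C\|f\|_{L^p}\|\D\psi\|_{L^2}$. For the critical integral I would pass to the Rivi\`ere--Struwe Coulomb frame: under the smallness of $\|\Om\|_{M^{2,n-2}}$ there exist $P \in W^{1,2}(B_1,SO(m))$ and an $so(m)$-valued $(n-2)$-form $\xi$ with $P^{-1}\D P + P^{-1}\Om P = \ast \ed \xi$, together with the Morrey controls $\|\D P\|_{M^{2,n-2}} + \|\ed \xi\|_{M^{2,n-2}} \leq C\|\Om\|_{M^{2,n-2}}$. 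Substituting the equivalent identity $\Om = PAP^{-1} - \D P\cdot P^{-1}$ (with $A := \ast\ed\xi$ divergence-free) into $\int \Om.\D u\cdot\psi$, integrating by parts, and exploiting the vanishing of $\dv A$ to replace $u$ by the oscillation $u - u(x_0)$ for an interior base point $x_0$, each summand becomes a pairing of an exact form against a co-exact form.

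By the div--curl lemma of Coifman--Lions--Meyer--Semmes each such integrand belongs to $\h^1(\R^n)$, and by H\"older's inequality in Morrey spaces it belongs simultaneously to $M^{1,n-2}$; Corollary \ref{adams} therefore promotes it to $H^{-1}_{\mathrm{loc}}$. Pairing the resulting $H^{-1}$-bound against the remaining factors, and using $\|u - u(x_0)\|_{L^\infty(B_1)} \leq [u]_{C^{0,\gamma}(B_1)}$ from Theorem \ref{old} to extract the H\"older seminorm, one arrives at
\[
\Bigl|\int_{B_1}\Om.\D u\cdot\psi\Bigr|\leq C\|\Om\|_{M^{2,n-2}(B_1)}[u]_{C^{0,\gamma}(B_1)}\|\D\psi\|_{L^2(B_1)},
\]
after which Young's inequality absorbs $\|\D\psi\|_{L^2}$ on the left. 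The principal obstacle is this last step: when $n\geq 3$ the dual potential $\xi$ is an $(n-2)$-form rather than a scalar as in the two-dimensional analogue \cite{Sh_To}, so in order to expose the div--curl structure one must first Hodge-decompose the 1-form $P^{-1}\D u$ and carefully align the exact factors against the co-exact ones; this is the technical heart of the argument and the point at which Corollary \ref{adams}, the novel analytic tool of the paper, becomes indispensable.
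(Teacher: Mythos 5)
Your overall strategy diverges from the paper's in a way that introduces a real gap. You decompose $u = h + \psi$ with $h$ the harmonic replacement of $u$ on $B_1$, reduce to bounding $\|\D\psi\|_{L^2(B_1)}$, and claim that after substituting $\Om = P(\dv\eta)P^{-1} - \ed P\,P^{-1}$ into $\int \Om.\D u\cdot\psi$ ``each summand becomes a pairing of an exact form against a co-exact form'' so that CLMS places it in $\h^1$ and Corollary \ref{adams} then in $H^{-1}$. This is false for the $\ed P\,P^{-1}$-term: commuting the $P$-factors, that term produces $\langle \ed P^{-1}, \ed u\rangle$ (or the matrix-valued analogue), which is an inner product of two \emph{exact}, hence curl-free, 1-forms. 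The div--curl lemma of \cite{clms} needs one factor curl-free and the other divergence-free; a product of two gradients lies only in $L^1$ and is not in $\h^1$ in general. This exact-exact product is precisely the obstruction the Coulomb gauge does \emph{not} remove, and it is why the paper does not attempt an $H^{-1}$ estimate on $\Om.\D u$ itself.

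The paper avoids this by Hodge-decomposing the 1-form $P^{-1}\ed u = \ed a + \dv b + h$ (Lemma \ref{hodged}). The exact-exact product $\ed P^{-1}\wedge \ed u$ then appears as the source for the $b$-equation $\Dl b = \ed P^{-1}\wedge \ed u$, and $\|\dv b\|_{L^2}$ is controlled \emph{not} through $\h^1$ but by a duality argument: testing against $E = \ed e_1 + \dv e_2 + e_3$ and using the orthogonality $(\ed(P^{-1}u), \dv e_2) = 0$ (valid because $e_2$ has vanishing normal component), then integrating by parts and invoking $[u]_{C^{0,\gamma}}$. The $\h^1$-plus-Corollary \ref{adams} machinery enters only in the estimate for $\ed a$, where the source is the genuine div--curl pair $\langle \dv\eta, \ed P^{-1}\rangle\,\ti u$. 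Finally, the $r^n$-decay comes from the harmonic \emph{1-form} $h$ in that Hodge decomposition, using $\|h\|_{L^2(B_r)}^2 \leq r^n\|h\|_{L^2(B_1)}^2 \leq r^n\|\ed u\|_{L^2(B_1)}^2$ (orthogonality of the decomposition and $P$).

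Your last sentence concedes that one must ``first Hodge-decompose the 1-form $P^{-1}\D u$'' — but once you do so, the harmonic piece of that decomposition already supplies the $r^n$-decay, and $P\,h_{\mathrm{Hodge}}$ is not the gradient of your scalar harmonic replacement $h$, so the two decompositions cannot be glued together. There are secondary problems as well: testing the equation against $\psi$ requires $\Om.\D u \in H^{-1}(B_1)$ with the claimed bound, which does not follow from the paper's estimates; and manipulating $\int \Om.\D u\cdot\psi$ via the gauge requires pairing $H^{-1}$ quantities against $P^{-1}\psi$, which for $n\geq 3$ fails to lie in $W^{1,2}$ (since $\D P^{-1}\cdot\psi$ is only $L^q$ for $q<2$). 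To repair the argument you should discard the harmonic replacement of $u$ entirely and carry out the Hodge decomposition of $P^{-1}\ed u$ from the outset, estimating $\ed a$, $\dv b$ and $h$ separately as above.
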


\begin{proof}[Proof of Proposition \ref{main}]
We follow the argument for the proof of  \cite[Lemma 7.3]{Sh_To}: Pick $\dl= \dl(n,p)$ sufficiently small so that 
\begin{equation}\label{lambda}
\lambda:=\fr {1 + \dl}{2^n} < \fr 1{2^{n-2+2\gamma}} :=\Lambda \in \left(\fr{1}{2^n} , 1\right)
\end{equation}
since $\gamma = 2-\fr np \in (0,1)$. 
 
Consider the solution on some small ball $B_R(x_0) \In B_1$ by re-scaling via $\hat{u}$, see appendix \ref{scal}. Since the hypotheses of Proposition \ref{decay1} are also satisfied for $\hat{u}$ we have (by \eqref{d1}) 
$$\|\D \hat{u}\|_{L^2(B_r)}^2 \leq C(\|\Om\|_{\M^{2,n-2}(B_1)}^2[\hat{u}]_{C^{0,\gamma}(B_1)}^2 + \|\hat{f}\|_{L^p(B_1)}^2) + r^n(1+\dl)\|\D \hat{u}\|_{L^2(B_1)}^2, $$
and setting $r=\fr12$ yields
\begin{equation*}
\|\D \hat{u}\|_{L^2(B_{\fr12})}^2 \leq  \lambda \|\D \hat{u}\|_{L^2(B_1)}^2+ C(\|\Om\|_{\M^{2,n-2}(B_1)}^2[\hat{u}]_{C^{0,\gamma}(B_1)}^2 + \|\hat{f}\|_{L^p(B_1)}^2). 
\end{equation*}
Undoing the scaling gives 
\begin{equation*}
\|\D u\|_{L^2(B_{\fr{R}{2}}(x_0))}^2 \leq  \lambda \|\D u\|_{L^2(B_R)}^2+ CR^{n-2+2\gamma}(\|\Om\|_{\M^{2,n-2}(B_1)}^2[u]_{C^{0,\gamma}(B_R(x_0))}^2 + \|f\|_{L^p(B_R(x_0))}^2).
\end{equation*}
Therefore, setting $R=2^{-k}$, $k\in \mathbb{N}_0$ and $a_k:= \|\D u\|_{L^2(B_{2^{-k}})}^2$ we have 
\begin{eqnarray*}
a_{k+1} &\leq& \lambda a_{k} + \left(\fr12\right)^{k(n-2+2\alpha)}C(\|\Om\|_{\M^{2,n-2}(B_1)}^2[u]_{C^{0,\gamma}(B_1)}^2 + \|f\|_{L^p(B_1)}^2)\\
&=& \lambda a_{k} + \Lambda^kC(\|\Om\|_{\M^{2,n-2}(B_1)}^2[u]_{C^{0,\gamma}(B_1)}^2 + \|f\|_{L^p(B_1)}^2).
\end{eqnarray*}
This can be solved to yield (letting $K:=C(\|\Om\|_{\M^{2,n-2}(B_1)}^2[u]_{C^{0,\gamma}(B_1)}^2 + \|f\|_{L^p(B_1)}^2)$)
$$a_{k+1}\leq \lambda^k a_1
+K\Lambda\frac{(\Lambda^k-\lambda^k)}{\Lambda-\lambda},$$
and by \eqref{lambda}, this simplifies to
$$\|\nabla u \|_{L^2(B_{2^{-k}}(x_0))}^2=:
a_k\leq C\Lambda^k.$$
Therefore, for $r\in (0,1/2]$
$$\|\nabla u \|_{L^2(B_r(x_0))}^2\leq Cr^{n-2+2\gamma}.$$
We have $$ \|\D u\|_{\M^{2,n-2+2\gamma}(B_{\fr 12})}^2 \leq C,$$
and a covering argument concludes the proof.

\end{proof}

\section{Proof of Proposition \ref{decay1}}\label{pdecay1}

\begin{proof}[Proof of Proposition \ref{decay1}]

We will use the Coulomb gauge in order to re-write our equation. Set $\epsilon$ small enough so that we can apply Lemma \ref{coulomb}. We have (see appendix \ref{hodge} for the relevant  background on Sobolev forms)
\begin{equation*}
\dv (P^{-1}\ed u) = \langle\dv \eta , P^{-1}\ed u\rangle +P^{-1}f
\end{equation*}
and 
\begin{equation*}
\ed (P^{-1}\ed u) = (\ed P^{-1} \wedge \ed u).
\end{equation*}
We can also set $\epsilon$ small enough in order to apply Theorem \ref{old} so that $u \in C^{0,\gamma}$ where $\gamma = 2-\fr{n}{p}$. Now we wish to extend the quantities arising above in the appropriate way: First of all we may extend $\eta$ by zero. We also extend $P-\fr{1}{|B_1|}\int_{B_1}P$ to $\ti{P} \in W^{1,2}\bigcap L^{\infty}(\R^n)$ and finally $u$ to $\ti{u} \in C^{0,\gamma}(\R^n)$ where each has compact support in $B_2$ (we may assume $u \in C^{0,\gamma}(\overline{B}_1)$).   

Note that we have $\|\D \ti{P}\|_{L^2} \leq C\|\D P\|_{L^2(B_1)} \leq C\|\Om\|_{\M^{2,n-2}(B_1)}$ by Poincar\'e's inequality and $\D \ti{P} = \D P$ in $B_1$. We also have $\ti{u} \in C^{0,\gamma}(\R^n)$ with $\|\ti{u}\|_{C^{0,\gamma}} \leq C \|u\|_{C^{0,\gamma}}$ and (since we may assume $\int u = 0 $) we have $\|\ti{u}\|_{C^{0,\gamma}} \leq C [u]_{C^{0,\gamma}}$, moreover $\ti{u} = u$ in $B_1$. All the constants here come from standard extension operators and are independent of the function, see for instance \cite{gt}.   
 
Now we use Lemma \ref{hodged} in order to write $P^{-1}\ed u = \ed a + \dv b + h$ with $a, b, h$ as in the Lemma. 
Notice that we have $\Dl a = \langle\dv \eta , P^{-1}\ed u\rangle +P^{-1}f$ and $\Dl b = \ed P^{-1} \wedge \ed u$ weakly. We proceed to estimate $\D u \in L^2$ by estimating $\|\ed a\|_{L^2}$, $\|\dv b\|_{L^2}$ and using standard properties of harmonic functions in order to deal with $\|h\|_{L^2}$. 

We start with $\|\ed a\|_{L^2}$; notice that $$\langle\dv \eta , P^{-1} \ed u\rangle = \langle\dv \eta, \ed (P^{-1} u)\rangle - \langle\dv \eta , \ed P^{-1}\rangle u = I + II.$$
For $I$, pick $\phi \in C_c^{\infty}(B_1)$ and check (we use that $\eta$ has zero boundary values)
\begin{eqnarray*}
\int \ast \langle\dv \eta, \ed (P^{-1} u)\rangle\phi &=&  (\dv \eta , \ed (P^{-1}u)\phi)  \\
&=& (\dv \eta ,\ed (P^{-1}u\phi)) - (\dv \eta , (\ed \phi)P^{-1}u) \\
&=& - (\dv \eta , (\ed \phi)P^{-1}u ) \\
&\leq& \|\D \eta\|_{L^2(B_1)}\|\D \phi\|_{L^2(B_1)}\|P^{-1}u\|_{L^{\infty}(B_1)}  \\
&\leq& C \|\Om\|_{\M^{2,n-2}(B_1)}\|\D \phi\|_{L^2(B_1)} [u]_{C^{0,\gamma}(B_1)}.
\end{eqnarray*}
We have $I \in H^{-1}(B_1)$ with 
\begin{equation}\label{I}
\|I\|_{H^{-1}(B_1)} \leq C \|\Om\|_{\M^{2,n-2}(B_1)}[u]_{C^{0,\gamma}(B_1)}.
\end{equation}
For $II$ notice that $\langle\dv \eta , \ed \ti{P}^{-1}\rangle\ti{u} = \langle\dv \eta, \ed P^{-1}\rangle u$ in $B_1$. Moreover $\langle\dv \eta , \ed \ti{P}^{-1}\rangle \in \mathcal{H}^1(\R^n)$ with
\begin{equation*}
\|\langle\dv \eta , \ed \ti{P}^{-1}\rangle\|_{\mathcal{H}^1}\leq C \|\D \eta\|_{L^2(B_1)}\|\D \ti{P}\|_{L^2(B_1)} \leq C\|\Om\|_{\M^{2,n-2}(B_1)}^2
\end{equation*}
by the result of Coiffman et al, \cite{clms}. 

The space $\mathcal{H}^1$ is \emph{not} stable by multiplication of smooth functions since $f \in \mathcal{H}$ implies $\int f = 0$. However for the local Hardy space $h^1$, as long as the multiplier function is sufficiently regular, we have stability. For instance if $h \in h^1$ and $g \in C^{0,\gamma}$, then $gh \in h^1$ and
$$\|gh\|_{h^1} \leq C(\gamma)\|g\|_{C^{0,\gamma}}\|h\|_{h^1}.$$Therefore,  $ \langle\dv \eta , \ed \ti{P}^{-1}\rangle\ti{u} \in h^1(\R^n)$ with
\begin{equation*}
\|\langle\dv \eta , \ed \ti{P}^{-1}\rangle\ti{u}\|_{h^1(\R^n)} \leq C \|\Om\|_{\M^{2,n-2}(B_1)}^2[u]_{C^{0,\gamma}(B_1)}.
\end{equation*}
We also have $\|M_{n-2}( \langle\dv \eta , \ed \ti{P}^{-1}\rangle\ti{u})\|_{L^{\infty}} \leq C \|\Om\|_{\M^{2,n-2}(B_1)}^2[u]_{C^{0,\gamma}(B_1)}$ since for $R>0$
\begin{equation*}
R^{2-n}\int_{B_R(x_0)} \langle\dv \eta , \ed \ti{P}^{-1}\rangle\ti{u} = R^{2-n}\int_{B_R(x_0) \bigcap B_1} \langle\dv \eta , \ed \ti{P}^{-1}\rangle\ti{u} \leq C \|\Om\|_{\M^{2,n-2}(B_1)}^2[u]_{C^{0,\gamma}(B_1)}
\end{equation*}
(remember $\eta$ was extended by zero). 
Now, using Corollarly \ref{adams} we have $\langle\dv \eta , \ed \ti{P}^{-1}\rangle\ti{u} \in H^{-1}(B_1)$, moreover $\langle\dv \eta , \ed \ti{P}^{-1}\rangle\ti{u} = \langle\dv \eta, \ed P^{-1}\rangle u$ in $B_1$ so 
\begin{equation}\label{II}
\|II\|_{H^{-1}(B_1)} \leq C \|\Om\|_{\M^{2,n-2}(B_1)}^2[u]_{C^{0,\gamma}(B_1)}.
\end{equation}
Putting  \eqref{I} and \eqref{II} together yields $\langle \dv \eta , P^{-1} \ed u\rangle \in H^{-1}(B_1)$ with (assuming $\epsilon <1$)
\begin{equation*}
\|\langle\dv \eta , P^{-1} \ed u\rangle\|_{H^{-1}(B_1)} \leq C \|\Om\|_{\M^{2,n-2}(B_1)}[u]_{C^{0,\gamma}(B_1)}.
\end{equation*}
It is easy to check that $P^{-1}f \in H^{-1}(B_1)$ with $\|P^{-1}f\|_{H^{-1}(B_1)} \leq C\|f\|_{L^p(B_1)}$, overall this means that $a \in W_0^{1,2}(B_1)$ weakly solves 
\begin{equation*}
\Dl a = \langle\dv \eta , P^{-1} \ed u\rangle + P^{-1}f,
\end{equation*}
so we have 
\begin{equation}\label{a}
\|\D a\|_{L^2(B_1)} \leq C (\|\Om\|_{\M^{2,n-2}(B_1)}[u]_{C^{0,\gamma}(B_1)} + \|f\|_{L^p(B_1)}).
\end{equation}
Now we need to estimate $\|\dv b\|_{L^2(B_1)}$. We know that $b \in W_N^{1,2}(B_1,\bigwedge^2 \R^n)$ (see appendix \ref{hodge} for a definition) has $ \ed b = 0 $ and $\Dl b = (\ed P^{-1} \wedge \ed u)$. We have
\begin{equation*}
\|\dv b\|_{L^2(B_1)} = \sup_{E \in C^{\infty}(B_1,\bigwedge^1 \R^n)\,\, \|E\|_{L^2(B_1)}\leq 1}  (\dv b, E).
\end{equation*}
Using a smooth version of Lemma \ref{hodged} we can decompose each $E$ by $E= \ed e_1 + \dv e_2 + e_3$ where $e_1 \in C_0^{\infty}(B_1)$, $e_2 \in C_N^{\infty}(B_1, \bigwedge^2 \R^n)$ with $\ed e_2 = 0$ and $\ed e_3 = \dv e_3 = 0$ ($e_3$ is a harmonic one form). Notice that $(\dv b, \ed e_1) = 0$ since $b$ has zero normal component and  $\ed^2 e_1 =0$. Also we have $(\dv b , e_3) = 0$ since $e_3$ is harmonic and $b$ has vanishing normal components. Therefore 
\begin{eqnarray*}
(\dv b, E) &=& (\dv b,  \dv e_2)\\
&=& ( P^{-1}  \ed u, \dv e_2)\\
&=& (\ed (P^{-1}u), \dv e_2) - ((\ed P^{-1})u,\dv e_2)  \\
&=& -((\ed P^{-1})u, \dv e_2)\\
&\leq& C \|\Om\|_{\M^{2,n-2}(B_1)}[u]_{C^{0,\gamma}(B_1)}\|\dv e_2\|_{L^2(B_1)}\\
&\leq& C\|\Om\|_{\M^{2,n-2}(B_1)}[u]_{C^{0,\gamma}(B_1)}\|E\|_{L^2(B_1)}.
\end{eqnarray*}
Therefore 
\begin{eqnarray}\label{b}
\|\dv b\|_{L^2(B_1)} &\leq& C\|\Om\|_{\M^{2,n-2}(B_1)}[u]_{C^{0,\gamma}(B_1)}.
\end{eqnarray}
We note here that by \cite[Theorem 7.5.1]{morrey} and $\ed b = 0$ that we in fact have the same estimate for $\D b$.

We now use the fact that $h$ is harmonic giving that the quantity $r^{-n}\|h\|_{L^2(B_r)}^2 $ is increasing, and Lemma \ref{hodged} to give
\begin{eqnarray*}
\|h\|_{L^2(B_r)}^2 &\leq& r^n \|h\|_{L^2(B_1)}^2 \\
&\leq& r^n\|P^{-1}\ed u\|_{L^2(B_1)}^2 \\
&=& r^n \|\ed u\|_{L^2(B_1)}^2
\end{eqnarray*}
where the last line follows because $P$ is orthogonal. 

Going back to our original Hodge decomposition we see that (using Young's inequality, the orthogonality of $P$, \eqref{a} and \eqref{b})
\begin{eqnarray*}
\|\ed u \|_{L^2(B_r)}^2 &=& \|P^{-1}\ed u \|_{L^2(B_r)}^2 \\
&\leq& (\|h\|_{L^2(B_r)} + \|\ed a\|_{L^2(B_r)} + \|\dv b\|_{L^2(B_r)})^2 \\
&\leq& (1+\dl)\|h\|_{L^2(B_r)}^2 + C_{\dl} (\|\ed a\|_{L^2(B_r)} + \|\dv b\|_{L^2(B_r)})^2 \\
&\leq& (1+\dl)r^n\|\ed u\|_{L^2(B_1)}^2 + C_{\dl}(\|\Om\|_{\M^{2,n-2}(B_1)}^2 [u]_{C^{0,\gamma}(B_1)}^2 + \|f\|_{L^p(B_1)}^2).
\end{eqnarray*}
\end{proof}

\section{Proof of Theorem \ref{Adams} and Corollaries \ref{adams}, \ref{adams1}}

\begin{proof}[Proof of Theorem \ref{Adams}]

First assume the theorem holds for $p=1$. If $q>1$ and $g\in \M^{q,\beta}$ then by H\"older's inequality we have $g\in \M^{1,\hat{\beta}}$ with
$$M_{\hat{\beta}}[g]\leq C(n)M_{\beta}[g^q]^{\fr1q}$$
where $n-\hat{\beta} = \fr1q(n-\beta)$. Thus if $\al q < n-\beta$ (giving $\al < n-\hat{\beta}$), by applying the theorem for $p=1$ we have 
\begin{eqnarray*}
|A_{\alpha}[g](x)| &\leq& C(\al, \hat{\beta}, n) (M_{\hat{\beta}}[g](x))^{\fr{\alpha}{n-\hat{\beta}}}(g_{\ast}(x))^{\fr {n-\hat{\beta} -\alpha}{n-\hat{\beta}}}\\
&\leq& C(\al, \beta, n, q) (M_{\beta}[g^q](x))^{\fr{\al}{n-\beta}}(g_{\ast}(x))^{\fr{n-\beta-\al q}{n-\beta}}.
\end{eqnarray*}
Where we know that
$$C(\al, \beta, n, q) \leq   C(n) \sup\left\{\fr{1}{1-\left(\fr12\right)^{\al}},\fr{1}{1-\left(\fr12\right)^{\al-\fr{n-\beta}{q}}} \right\}.$$ 
It remains to prove Theorem \ref{Adams} for $p=1$. 

We split $A_{\alpha}(g)$ up into its near and far parts using a partition of unity subordinate to dyadic annuli of a chosen modulus $\dl$. More precisely, let $\theta (x) \in C_c^{\infty}(B_4 \sm B_{\fr 12})$ with $\theta (x) >0 $ for $1\leq |x|\leq 2$. Similarly as is done in Semmes \cite[Theorem 1.77]{semmes_primer} we can arrange so that
$$\sum_{j \in \Z}\theta(\dl^{-1}2^{-j}x) = 1 $$ for all $x \in \R^n \sm \{0\}$. Moreover we want for our choice of $a_{\al}$ that $C\|\D[\theta(4\cdot)a_{\al}(\cdot)]\|_{L^{\infty}(B_1)} \leq 1$ for some constant $C(n)$ (for reasons that will become apparent below). We can always find such a $C$ since we have assumed a uniform $C^1$ bound on $a_{\al}$ when restricted to the sphere. Notice that $\theta(4\cdot) a(\cdot) \in C_c^{\infty}(B_1)$ also. 

Now define $\eta^j (x) : = \dl^{-1}2^{-j} \theta (\dl^{-1}2^{-j}x) a_{\al}(x)$. Notice that $\dl 2^j \eta^j(x)$ is the piece of $a_{\al}$ around $\dl 2^{j-1} \leq |x| \leq \dl 2^{j+2}$, so that 
$$A_{\alpha}(g) = \sum_{j \in \Z} \dl 2^j \eta^j \ast g = \sum_{j \leq 0}\dl 2^j \eta^j \ast g + \sum_{j \geq 1}\dl 2^j \eta^j \ast g = I_{inner} + I_{outer}. $$
The intuition here is that we use the decay estimate we have on $g$ in order to deal with $I_{outer}$ and we use the Hardy space qualities of $g$ in order to deal with $I_{inner}$. 

With that in mind we start with estimating $I_{inner}$. We use the following claim: $$|\eta^j \ast g(x)| \leq C(\dl^{-1}2^{-j})^{1-\alpha} g_{\ast}(x).$$ 
This is easy enough to see, first of all we remark that in our definition of $g_{\ast}$ we choose to use the function $\psi (x) := C \theta(4x) a_{\al}(x)$, therefore $g_{\ast}(x):= \sup_{t>0} |\psi_t \ast g (x)|$. 
\begin{eqnarray*}
|\eta^j \ast g(x)| &=& \left| \int_{B_{\dl 2^{j+2}}} \dl^{-1}2^{-j} \theta(\dl^{-1}2^{-j}(x-y))a_{\al}(x-y)g(y) dy \right| \\
&=& \left| \int_{B_{\dl2^{j+2}}} \dl^{-1}2^{-j}\theta(\dl^{-1}2^{-j}(x-y))a_{\al}(\dl^{-1}2^{-(j+2)}(x-y))g(y)(\dl^{-1}2^{-(j+2)})^{n-\alpha} dy \right| \\
&=& C (\dl^{-1}2^{-j})^{1-\alpha}|\psi_{\dl 2^{j+2}} \ast g (x)| \leq C (\dl^{-1}2^{-j})^{1-\alpha}g_{\ast}(x).
\end{eqnarray*}
We estimate
\begin{eqnarray*}
|I_{inner}| &\leq& \sum_{j \leq 0}  \dl 2^j |\eta^j \ast g (x)| \\
&\leq& C \sum_{j \leq 0}  \dl 2^j(\dl^{-1}2^{-j})^{1-\alpha}    g_{\ast}(x) \\
&\leq& C\fr{1}{1-\left(\fr12\right)^{\al}}\dl^{\alpha} g_{\ast}(x).
\end{eqnarray*} 
Now we estimate $I_{outer}$
\begin{eqnarray*}
|I_{outer}| &\leq&	C\sum_{j \geq 1} \int_{\dl 2^{j-1} \leq |x-y| \leq \dl 2^{j+2}} |\theta(\dl^{-1}2^{-j}(x-y))||a_{\al}(x-y)||g(y)|dy \\
&\leq& C\sum_{j \geq 1} \int_{\dl 2^{j-1} \leq |x-y| \leq \dl 2^{j+2}} |x-y|^{\alpha-n}|g(y)|dy\\
&\leq& C\sum_{j \geq 1} (\dl 2^{j-1})^{\alpha-n}\int_{|x-y| \leq \dl 2^{j+2}} |g(y)|dy\\
&\leq& C \sum_{j \geq 1} (\dl 2^{j-1})^{\alpha-n} (\dl 2^{j+2})^{\beta} M_{\beta}(g)(x)\\
&\leq& C \fr{1}{1-\left(\fr12\right)^{\al-(n-\beta)}}\dl^{\alpha -(n- \beta)} M_{\beta} (g)(x).
\end{eqnarray*}
Putting together these threads gives us $$|A_{\alpha}(g)| \leq C(\al,\beta,n)( \dl^{\alpha} g_{\ast}(x) + \dl^{\alpha - (n-\beta)}M_{\beta}(g)(x)).$$
Setting $\dl = \left(\fr {M_{\beta}(g)(x)}{g_{\ast}(x)}\right)^{\fr{1}{n-\beta}}$ gives
$$|A_{\alpha}(g)| \leq C(\al,\beta,n) (M_{\beta}(g)(x))^{\fr{\alpha}{n-\beta}}(g_{\ast}(x))^{\fr {n-\beta -\alpha}{n-\beta}}.$$ 
\end{proof}

\begin{proof}[Proof of Corollary \ref{adams}] 
By \cite[Theorem 1.92]{semmes_primer} we know that $g \in h^1$ if and only if for any $\psi \in C_c^{\infty}$ with $\int \psi \neq 0$ there is a constant $\lambda$ such that 
$\psi(g-\lambda) \in \mathcal{H}^1(\R^n)$,
with 
$$\|\psi(g-\lambda)\|_{\mathcal{H}^1(\R^n)} \leq C \|g\|_{h^1(\R^n)},$$
where $C = C(\psi)$ and $\lambda$ is chosen such that $\int \psi(g-\lambda) =0$ i.e. $\lambda:= \fr{\int g\psi}{\int \psi}$.

Therefore we let  $\ti{g} = \psi(g-\lambda) + \psi \lambda$ where $\psi \in C_c^{\infty}$, $\psi \equiv 1$ on $K$ and $0\leq \psi \leq 1$. 

We have $\ti{g} = g$ in $K$, $\psi(g-\lambda) \in \mathcal{H}^1$, and $$\|\lambda \psi\|_{L^{\infty}(K)}\leq C(K)\|g\|_{L^1(K)} \leq C(K)\|g\|^{\fr12}_{M^{1,n-2}}\|g\|^{\fr12}_{h^1}.$$ Clearly then we have 
$$\|A_1[\lambda \psi]\|_{L^2(K)} \leq C(n,K)\|g\|^{\fr12}_{M^{1,n-2}}\|g\|^{\fr12}_{h^1}.$$ Moreover by applying Theorem \ref{Adams}
\begin{eqnarray*}
\|A_1[\psi(g-\lambda)]\|_{L^2(K)} &\leq& C(n)\|\psi(g-\lambda)\|^{\fr12}_{M^{1,n-2}}\|\psi(g-\lambda)\|^{\fr12}_{\h^1} \\
&\leq& C(n,K) \|g\|^{\fr12}_{M^{1,n-2}}\|g\|^{\fr12}_{h^1}.
\end{eqnarray*}

Therefore $A_{1}[\ti{g}] \in L^2(K)$ with
$$\|A_{1}[\ti{g}]\|_{L^2(K)} \leq C\|g\|_{\M^{1,n-2}}^{\fr12}\|g\|_{h^1}^{\fr12}.$$
Now set $w = N[\ti{g}]= \Gamma \ast \ti{g}$ where $N$ is the Newtonian potential, we have that $\D_i w = \D_i N[\ti{g}] = \D_i \Gamma \ast \ti{g}$ is an operator of the form $A_{1}[g]$ for $a_1(x) = C(n)\fr {x_i}{|x|^n}$. Therefore for $\phi \in C_c^{\infty}(K)$ we can test 
$$\int_{K}g\phi = \int_{K} (\psi(g-\lambda) + \lambda \psi)\phi = -\int_{K} \Dl w \phi = \int_{K} \D w.\D \phi \leq C\|\D w\|_{L^2(K)}\|\phi\|_{W^{1,2}(K)}.$$
Thus $\|g\|_{H^{-1}(K)}\leq C \|g\|_{\M^{1,n-2}}^{\fr12}\|g\|_{h^1}^{\fr12}.$
\end{proof}

\begin{proof}[Proof of Corollary \ref{adams1}] 
We essentially follow \cite{adams_riesz} here, except that we keep track of the constants. 
Let $g\in \M^{p,\beta}$, $p\geq 1$ and $0<\al p < n-\beta$. We will show that 
$$\twopartdef{\|A_{\al}[g]\|_{L^{\ti{p}}(B_r(x))}^{\tilde{p}} \leq C\|g\|_{\M^{p,\beta}}^{\ti{p}}r^{\beta}}{p>1}{|\{z\in B_r(x) : |A_{\al}[g](z)|>s\}|\leq C\|g\|_{\M^{1,\beta}}^{\ti{p}}s^{-\ti{p}}r^{\beta}}{p=1.}$$
To that end write $g_r= g\chi_{B_{2r}(x)}$ and $g^r=g-g_r$. For $g_r$ we know that $\|g_r\|_{L^p} \leq Cr^{\fr{\beta}{p}}\|g\|_{\M^{p,\beta}}$. Thus by standard estimates for the maximal function (see \cite[page 5]{stein_singular} for example) we have 
$$\twopartdef{\|M[g_r]\|_{L^p} \leq C(n)\left(\fr{p}{p-1}\right)^{\fr1p}r^{\fr{\beta}{p}}\|g\|_{\M^{p,\beta}}}{p>1}{|\{x:M[g_r](x)>s\}|\leq C(n)s^{-1}r^{\beta}\|g\|_{\M^{1,\beta}}}{p=1.}$$
Now we can directly apply the estimate from Theorem \ref{Adams} to conclude (using the trivial estimate $g_{\ast}(x)\leq C(n)M[g](x)$)
$$\twopartdef{\|A_{\al}[g_r]\|_{L^{\ti{p}}(B_r(x))}^{\tilde{p}} \leq C(n,\al,\beta,p)^{\ti{p}}\fr{p}{p-1}\|g\|_{\M^{p,\beta}}^{\ti{p}}r^{\beta}}{p>1}{|\{z\in B_r(x) : |A_{\al}[g_r](z)|>s\}|\leq C(n,\al,\beta)^{\ti{p}}\|g\|_{\M^{1,\beta}}^{\ti{p}}s^{-\ti{p}}r^{\beta}}{p=1.}$$
Where $C(n,\al,\beta)$ is the constant appearing in Proposition \ref{Adams}. 

If $z\in B_r(x)$ then 
\begin{eqnarray*}
|A_{\al}[g^r](z)| &\leq& C\int_{\R^n\sm B_r(z)} \fr{1}{|z-y|^{n-\al}}|g(y)|\id y \nonumber \\
&\leq & C\sum_{j\geq 1} \int_{B_{2^{j}r}(z)\sm B_{2^{j-1}r}(z)} \fr{1}{|z-y|^{n-\al}}|g(y)|\id y \\
&\leq & C\sum (2^j r)^{\al -n}\|g\|_{\M^{p,\beta}}r^{n-\fr{n-\beta}{p}} \\
&=& C\fr{1}{1-\left(\fr12\right)^{\al - \fr{n-\beta}{p}}}\|g\|_{\M^{p,\beta}}r^{\al-\fr{n-\beta}{p}}\\
&\leq& C(n,\al, \beta , p)\|g\|_{\M^{p,\beta}}r^{\al-\fr{n-\beta}{p}}.
\end{eqnarray*}
Therefore for $p\geq 1$, $\|A_{\al}[g^r]\|_{L^{\ti{p}}(B_r(x))}^{\ti{p}} \leq C(n,\al, \beta,p)^{\ti{p}}\|g\|_{\M^{p,\beta}}^{\ti{p}} r^{\beta}.$
\end{proof}


\appendix

\section{Morrey, Campanato and H\"older spaces}\label{Morrey}
Here we state some important facts concerning Morrey spaces. Clearly $\M^{p,0} = L^p$ and $\M^{p,n} = L^{\infty}$, also we see that $M_n[\cdot]=M[\cdot]$ is the usual maximal function up to a constant. Also note that if we allow $\beta >n $ then $\M^{p, \beta} = \{0\}$. 

The related Campanato spaces $\Ca^{p,\beta}$ are variations on $BMO$, thus we try to capture an integral measure of oscillation similar to that for the Morrey spaces. For $g \in L^1(E)$ let $g_{r,x} = \dint_{B_r(x)\cap E} g $ and we say that $g \in \Ca^{p,\beta}(E)$ if $g \in L^p(E)$ and 
$$[g]_{\Ca^{p,\beta}(E)} := \sup_{x \in E,\,\,r>0} \left(r^{-\beta} \int_{B_r(x)\cap E} |g - g_{r,x}|^p \right)^{\fr 1p} <\infty$$
with norm (making $\Ca^{p,\beta}$ Banach spaces)
$$\|g\|_{\Ca^{p,\beta}(E)} = [g]_{\Ca^{p,\beta}(E)} + \|g\|_{L^p(E)}.$$
For Lipschitz domains we have $\M^{p,\beta} = \Ca^{p,\beta}$, when $0\leq \beta < n$ \cite[Chapter III, Proposition 1.2]{giaquinta}. However (modulo constants) $\Ca^{p,n} = BMO$ for all $p$ as opposed to $\M^{p,n} =L^{\infty}$.
We actually have that $\M^{p,\beta}\In \Ca^{p,\beta}$ with a uniform estimate (in $n$, $p$ and $\beta$). The reverse inclusion holds with an estimate whose constant blows up as $\beta$ approaches $n$.

Moreover $\Ca^{p,\beta}$ makes sense for $\beta >n $  and when $n < \beta \leq n+p$ we have $\Ca^{p,\beta} = C^{0,\gamma}$ with $\gamma = \fr{\beta - n}{p}$ \cite[Chapter III, Theorem 1.2]{giaquinta}. If $\beta > n+p $ then $\Ca^{p,\beta}$ are the constant functions. 

We say that $g \in \M_k^{p,\beta}$ if $g, \D^k g \in \M^{p,\beta}$. Using the Poincar\'e inequality we see that if $g \in \M_1^{p,\beta}$ for some $0\leq \beta \leq n$ then $g \in \Ca^{p,p+\beta}$. Therefore if $n-p<\beta \leq n$ then $g \in C^{0,\fr{\beta + p -n}{p}}$. Also the borderline case ($\beta = n-p$) gives $g \in BMO$. These last facts yield another proof of the Morrey embedding theorem: suppose $g\in W^{1,q}$ for $q>n$. Then $g\in \M_1^{n, n-\fr{n^2}{q}}\emb C^{0,1-\fr{n}{q}}$.

We also introduce here the related weak Morrey spaces $\M^{(p,\infty),\beta}(E)$, consisting of functions $g$ in the Lorentz space $L^{(p,\infty)}(E)$ or `weak $L^p$' with 
$$\|g\|_{\M^{(p,\infty),\beta}(E)}:=\sup_{x,\,\,r>0} r^{-\fr{\beta}{p}}\|g\|_{L^{(p,\infty)}(B_r(x)\cap E)} < \infty.$$ 
This condition is equivalent to 
$$|\{x\in B_r(x_0)\cap E : |g|(x) >s\}|\leq C\|g\|_{\M^{(p,\infty),\beta}(E)}^p s^{-p}r^{\beta}$$
with $C$ independent of $x_0$ and $r$.

Even though the Campanato spaces do not interpolate the $L^p$ spaces we still have good estimates on singular integrals. The following result of Peetre \cite{peetre_cam} generalises both Calderon-Zygmund and Schauder estimates. We consider the singular integral operator given by the operation $g\mapsto \D^2 N[g]$. In words it is the operator that maps a function to second order derivatives of its Newtonian potential. 
\begin{theorem}\label{peetre} For $1<p<\infty$ and $0\leq \beta < n+p$,  
$\D^2 N: \Ca^{p,\beta} \rightarrow \Ca^{p,\beta}$ is a bounded operator.
\end{theorem}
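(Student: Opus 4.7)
The operator $T := \D^2 N$ is convolution with the kernel $K = \D^2 \Gamma$, which is homogeneous of degree $-n$, smooth and mean-zero on the unit sphere, and satisfies $|\D K(x)|\leq C|x|^{-n-1}$; thus $T$ is a standard Calder\'on--Zygmund operator, so in particular $\|Tg\|_{L^p(\R^n)}\leq C\|g\|_{L^p(\R^n)}$ for every $1<p<\infty$. Since the $\Ca^{p,\beta}$ norm already includes the $L^p$ norm, it suffices to control the Campanato seminorm, i.e.\ to find for every ball $B_r(x_0)$ a constant $c=c(x_0,r)$ with
\begin{equation*}
\int_{B_r(x_0)}|Tg(x)-c|^p\,dx \leq Cr^\beta \|g\|_{\Ca^{p,\beta}(\R^n)}^p.
\end{equation*}

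The plan is the classical near--far split. I would set $h_1 := (g-g_{B_{2r}(x_0)})\chi_{B_{2r}(x_0)}$ and $h_2 := (g-g_{B_{2r}(x_0)})\chi_{\R^n \sm B_{2r}(x_0)}$, and (up to an additive constant coming from $T$ applied to the number $g_{B_{2r}(x_0)}$, which is harmless for the oscillation) write $Tg - c = Th_1 + (Th_2 - Th_2(x_0))$ with the choice $c := Th_2(x_0)$. The near part is handled directly by the $L^p$ boundedness of $T$ together with the Campanato seminorm, giving
\begin{equation*}
\int_{B_r(x_0)}|Th_1|^p \leq C\int_{B_{2r}(x_0)}|g-g_{B_{2r}(x_0)}|^p \leq Cr^\beta [g]_{\Ca^{p,\beta}(\R^n)}^p.
\end{equation*}

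For the far part I would estimate $|Th_2(x)-Th_2(x_0)|$ pointwise on $B_r(x_0)$ using the kernel-gradient bound $|K(x-y)-K(x_0-y)|\leq Cr|x_0-y|^{-n-1}$, valid because $|x_0-y|\geq 2r\geq 2|x-x_0|$. Dyadically decomposing $\{|y-x_0|\geq 2r\}$ into annuli $A_k = B_{2^{k+1}r}(x_0)\sm B_{2^k r}(x_0)$ and applying H\"older's inequality on each $A_k$, one telescopes the successive ball averages $g_{B_{2^{j+1}r}(x_0)}-g_{B_{2^j r}(x_0)}$, each of which is bounded by $C(2^{j+1}r)^{(\beta-n)/p}[g]_{\Ca^{p,\beta}}$ directly from the Campanato seminorm. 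The entire estimate then collapses to a single geometric series in $k$ with ratio $2^{(\beta-n-p)/p}$, and integrating the resulting pointwise bound over $B_r(x_0)$ yields again $Cr^\beta[g]_{\Ca^{p,\beta}}^p$, as desired.

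The main obstacle is precisely the convergence of this last geometric series, and it is exactly what forces the hypothesis $\beta<n+p$: at $\beta=n$ the telescoping of the ball means picks up an extra linear factor $k$ (the BMO regime) which must be absorbed by the kernel decay $2^{-k(1+n/p)}$; for $\beta>n$ the successive means grow polynomially and the two effects balance precisely at $\beta=n+p$, making the stated range sharp. A small technical point is that one must also check that the integrals defining $Th_2$ converge absolutely under the Campanato hypothesis, but this follows from the same dyadic bookkeeping used above.
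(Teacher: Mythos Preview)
The paper does not prove this theorem; it is stated in Appendix~\ref{Morrey} as a result of Peetre \cite{peetre_cam} and used as a black box. Your sketch is the standard Calder\'on--Zygmund argument on Campanato spaces and is essentially correct: the near--far split, the $L^p$ boundedness for the local piece, the kernel-gradient estimate with dyadic annuli and telescoped ball means for the far piece, and the identification of $\beta<n+p$ as exactly the threshold for convergence of the resulting geometric series are all on target.

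The one place worth tightening is the phrase ``up to an additive constant coming from $T$ applied to the number $g_{B_{2r}(x_0)}$''. A nonzero constant is not in $L^p(\R^n)$, so $T(\text{const})$ is not defined through the $L^p$ extension. The clean fix is to split $g=g_1+g_2$ with $g_1=(g-g_{B_{2r}(x_0)})\chi_{B_{2r}(x_0)}$ and $g_2=g-g_1$; both pieces lie in $L^p(\R^n)$, so $Tg_1$ and $Tg_2$ are honestly defined, your near estimate goes through verbatim for $Tg_1$, and for $Tg_2(x)-Tg_2(x_0)$ one writes $g_2(y)-g_{B_{2r}(x_0)}=(g(y)-g_{B_{2r}(x_0)})\chi_{\R^n\setminus B_{2r}(x_0)}(y)$ and uses the spherical mean-zero property of $K=\D^2\Gamma$ to kill the residual constant term. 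This is precisely what your parenthetical is gesturing at, and with that adjustment the argument is complete.
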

Therefore we also have for $1<p<\infty$ and $0\leq \beta < n$ that $\nabla^2 N : \M^{p,\beta} \rightarrow \M^{p,\beta}$ is bounded.

\section{Hodge Decompositions and Coulomb gauge}\label{hodge}

We require the following from \cite{riviere_struwe}, which shows that we can still find the appropriate Coulomb gauge even in the Morrey space setting. 

\begin{lemma}\label{coulomb}
Let $\Om \in \M^{2, n-2}(B_1, so(m) \otimes \wedge^1\R^n)$. Then there exists $\epsilon >0$ such that whenever $\|\Om\|_{\M^{2,n-2}(B_1)}^2 \leq \epsilon$ there exist $P \in W^{1,2}(B_1,SO(m))$ and $\eta \in W_0^{1,2}(B_1, so(m) \otimes \wedge^{2} \R^n)$ such that $\ed \eta =0$ on $B_1$ and
\begin{equation*}
P^{-1}\ed P + P^{-1}\Om P = \dv \eta.
\end{equation*}
Moreover $\D P, \D \eta \in \M^{2,n-2}(B_1)$ with 
\begin{equation*}
\|\D P\|^2_{\M^{2,n-2}(B_1)} + \|\D \eta\|^2_{\M^{2,n-2}(B_1)} \leq C\|\Om\|^2_{\M^{2,n-2}(B_1)} \leq C \epsilon. 
\end{equation*}
\end{lemma}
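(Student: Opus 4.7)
The plan is to follow the Uhlenbeck gauge construction of Rivi\`ere--Struwe adapted to this setting, with the main new difficulty being the Morrey estimates rather than just the $L^2$ ones. First I would construct $P$ variationally: consider the functional
\[ F(Q) = \int_{B_1} |Q^{-1}\ed Q + Q^{-1}\Om Q|^2 \]
on $W^{1,2}(B_1, SO(m))$. Since $F(I)=\|\Om\|_{L^2(B_1)}^2<\infty$ and $SO(m)$ is compact, direct methods produce a minimizer $P$ with the immediate $W^{1,2}$-bound $\|\ed P\|_{L^2}\leq C\|\Om\|_{L^2}$ coming from $F(P)\leq F(I)$. The Euler--Lagrange equation of $F$ with free boundary values is precisely the Coulomb condition $\dv(P^{-1}\ed P + P^{-1}\Om P)=0$ together with the natural boundary condition that $*(P^{-1}\ed P + P^{-1}\Om P)$ vanishes in the appropriate sense on $\partial B_1$.

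Given this, I would use Hodge theory on the ball to produce $\eta$: for a $1$-form $\alpha \in L^2(B_1,\wedge^1 \R^n)$ with $\dv\alpha=0$ and zero normal trace, there is a unique $\eta \in W_0^{1,2}(B_1,\wedge^2 \R^n)$ satisfying $\ed\eta=0$ and $\alpha = \dv\eta$, with the a priori estimate $\|\ed\eta\|_{L^2}\leq C\|\alpha\|_{L^2}$. Applied to $\alpha = P^{-1}\ed P + P^{-1}\Om P$, this gives $\eta$ together with the $L^2$-estimate $\|\ed\eta\|_{L^2}^2\leq C\|\Om\|_{L^2}^2$. Combined with the bound on $\ed P$, this gives everything in the statement except that the estimates are in $L^2$ rather than $M^{2,n-2}$.

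To upgrade to the Morrey estimate, I would take exterior derivatives of the gauge identity to obtain elliptic equations for $P$ and $\eta$. Using $\ed(P^{-1})=-P^{-1}(\ed P)P^{-1}$, one computes
\[ -\Dl \eta = \ed(\dv \eta) = \ed(P^{-1}\ed P) + \ed(P^{-1}\Om P), \]
which is a sum of terms each bilinear in $\{\ed P,\Om\}$, and similarly $\Dl P = P\,|P^{-1}\ed P|^2 + $ lower order terms involving $\dv\Om$ and $\Om\cdot\ed P$ (using the Coulomb condition to eliminate $\dv(P^{-1}\Om P)$). Hence, for any ball $B_r(x)\In B_1$, each right-hand side is a product of $M^{2,n-2}$ quantities and therefore lies in $M^{1,n-2}$ with a bound quadratic in $\|\ed P\|_{M^{2,n-2}} + \|\Om\|_{M^{2,n-2}}$. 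A Morrey-space Calder\'on--Zygmund estimate of the form $\|\ed^2 v\|_{M^{1,n-2}(B_{r/2})} \leq C\|\Dl v\|_{M^{1,n-2}(B_r)} + (\text{lower order})$, together with an absorption argument using the smallness hypothesis $\|\Om\|_{M^{2,n-2}}^2\leq \eps$, gives the desired estimate.

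The main obstacle is precisely this last absorption step: the quadratic term $|\ed P|^2$ on the right-hand side of the $P$-equation is exactly at the critical scaling, and without smallness it would not be possible to absorb this term back into the left-hand side. To make the absorption rigorous I would either (a) run a fixed-point/contraction argument on the map $(P,\eta)\mapsto $ (new solution of the linear Hodge/Poisson systems) inside a small ball of $M^{2,n-2}$, with the contraction constant controlled by $\|\Om\|_{M^{2,n-2}}$, or equivalently (b) use a continuity method in $t\in[0,1]$ applied to the family $\Om_t = t\Om$, noting that at $t=0$ one has the trivial solution $(P,\eta)=(I,0)$; openness follows from the implicit function theorem on the linearization and closedness from the Morrey estimate itself. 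Either route produces both existence and the stated quantitative control.
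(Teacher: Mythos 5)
The paper does not prove this lemma; it is imported verbatim from Rivi\`ere--Struwe, so there is no in-paper proof to compare against. Your overall architecture (variational gauge construction, Hodge decomposition to produce $\eta$, and a continuity or fixed-point scheme to upgrade from $L^2$ to $\M^{2,n-2}$) is the right one, and option~(b) at the end of your proposal is essentially the Rivi\`ere--Struwe strategy. The bootstrap step, however, contains a genuine gap. The ``Morrey-space Calder\'on--Zygmund estimate'' $\|\D^2 v\|_{\M^{1,n-2}}\leq C\|\Dl v\|_{\M^{1,n-2}}$ that you invoke is false: singular integral operators are unbounded at $p=1$ even in the Morrey scale (Theorem~\ref{peetre} requires $1<p<\infty$), and from $\Dl v\in\M^{1,n-2}$ one can only deduce $\D v\in\M^{(2,\infty),n-2}$, a strictly weaker (weak Morrey) space. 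This is exactly the obstruction highlighted in the paper's introduction. Consequently the loop ``$\D P,\Om\in\M^{2,n-2}$ gives RHS in $\M^{1,n-2}$ gives $\D P\in\M^{2,n-2}$'' loses information at every pass and does not close, regardless of smallness.

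The idea you are missing is the compensated-compactness structure. The term $\ed(P^{-1}\ed P)=\ed P^{-1}\wedge\ed P$ is a wedge of exact $L^2$ forms, so by Coifman--Lions--Meyer--Semmes it lies in the Hardy space $\h^1$ (and under the Morrey hypotheses in $\h^1\cap\M^{1,n-2}$), not merely in $L^1\cap\M^{1,n-2}$; it is precisely this improvement (through a Wente-type estimate or an Adams-type potential estimate as in Theorem~\ref{Adams}) that restores strong $\M^{2,n-2}$ decay and lets the absorption close. Any treatment that estimates the quadratic terms only by Cauchy--Schwarz, ignoring the Jacobian structure, must fail. A secondary issue: your formula for $-\Dl\eta$ contains $\ed(P^{-1}\Om P)$, which by Leibniz produces $P^{-1}(\ed\Om)P$; since $\Om$ is assumed only in $\M^{2,n-2}$, $\ed\Om$ is not controlled and the term is not ``bilinear in $\{\ed P,\Om\}$''. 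It must be handled weakly, integrating by parts against the test form to shift the derivative off $\Om$ (the same comment applies to the $\dv\Om$ term in your equation for $P$).
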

\begin{remark}
This appears different to the lemma appearing in \cite{riviere_struwe}, however upon replacing $\eta$ with $(-1)^{3n+1}\ast \xi$ as it appears in \cite{riviere_struwe} we see they are the same. The notation here should be explained, $\ed$ is the exterior derivative, $\dv = (-1)^{k(n-k)}\ast \ed \ast$ is the divergence operator on $k$-forms (formal adjoint of exterior derivative) and for any form $\om$, $\D \om$ refers to the collection of all first order derivatives, as opposed to $\ed \om $ or $\dv \om$ which refers to new forms comprised of certain combinations of first order derivatives of $\om$. Of course $\ast$ is the hodge star operator. 
\end{remark}

We recall here that there is a natural point-wise inner product for $k$-forms given by $\langle \om^1, \om^2\rangle =\ast( \om^1 \wedge \ast \om^2)$ and an $L^2$-inner product given by $(\om^1 , \om^2 ) = \int \ast \langle \om^1, \om^2\rangle$.  

Our main reference here is \cite[Chapter 7]{morrey} where we can find all of the results stated below, in particular we require the following.

\begin{lemma}\label{hodged}
Suppose $\om \in L^2(B_1,\bigwedge^1 \R^n)$ then there are unique $ a \in W_0^{1,2}(B_1)$, $b \in W_N^{1,2}(B_1,\bigwedge^2 \R^n)$ and a harmonic one form $h \in L^2(B_1,\bigwedge^1 \R^n)$ such that 
$$\om = \ed a + \dv b + h.$$
Moreover $\ed b = 0$ with
$$\|a\|_{W^{1,2}(B_1)} + \|b\|_{W^{1,2}(B_1)} + \|h\|_{L^2(B_1)} \leq C\|\om\|_{L^2(B_1)}$$
and
$$\|\ed a\|_{L^2(B_1)}^2 + \|\dv b\|_{L^2(B_1)}^2 + \|h\|_{L^2(B_1)}^2 = \|\om\|_{L^2(B_1)}^2.$$
\end{lemma}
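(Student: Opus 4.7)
The plan is to perform two successive $L^2$-orthogonal projections in the Hilbert space $L^2(B_1, \bigwedge^1 \R^n)$: the first producing the exact piece $\ed a$, the second producing the coexact piece $\dv b$, with the harmonic remainder $h$ appearing naturally. The boundary conditions $a \in W_0^{1,2}$ and $b \in W_N^{1,2}$ with gauge $\ed b = 0$ are exactly what is needed to fix a unique representative at each stage.

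For the exact piece, the map $\phi \mapsto \ed \phi$ from $W_0^{1,2}(B_1)$ into $L^2(B_1, \bigwedge^1)$ is a bicontinuous embedding onto a closed subspace $E$ by Poincar\'e's inequality. I would define $a \in W_0^{1,2}$ as the preimage of the orthogonal $L^2$-projection of $\om$ onto $E$, which is equivalently the unique weak solution of the Dirichlet problem $\Delta a = \dv \om$, $a|_{\partial B_1}=0$, provided by Lax--Milgram. Testing with $a$ itself gives $\|\ed a\|_{L^2} \leq \|\om\|_{L^2}$, and Poincar\'e upgrades this to $\|a\|_{W^{1,2}} \leq C\|\om\|_{L^2}$. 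The Euler--Lagrange equation amounts to $\om_1 := \om - \ed a$ being weakly divergence free on $B_1$, i.e.\ $\om_1 \in E^\perp$.

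For the coexact piece, introduce $V := \{b \in W_N^{1,2}(B_1, \bigwedge^2 \R^n) : \ed b = 0\}$ and $F := \{\dv b : b \in V\}$. Integration by parts using the vanishing normal trace of $b$ shows $F \subset E^\perp$. The technical heart of the proof is to establish that $F$ is closed in $L^2$, which is a consequence of the Gaffney--Friedrichs inequality
$$\|\D b\|_{L^2}^2 \leq C\bigl(\|\ed b\|_{L^2}^2 + \|\dv b\|_{L^2}^2 + \|b\|_{L^2}^2\bigr)$$
valid on $W_N^{1,2}$, together with a Poincar\'e-type inequality on $V$ modulo its finite-dimensional kernel under $\dv$ (i.e.\ harmonic $2$-forms with vanishing normal trace). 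This is precisely the material worked out in the relevant portion of Morrey's book. Taking $\dv b$ to be the orthogonal projection of $\om_1$ onto $F$ produces $b \in V$, unique after a gauge fixing inside the $\dv$-kernel, with $\|\dv b\|_{L^2} \leq \|\om_1\|_{L^2}$ and the Gaffney estimate upgrading this to a $W^{1,2}$-bound on $b$.

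Finally, set $h := \om - \ed a - \dv b$, which lies in $E^\perp \cap F^\perp$. The orthogonality against $E$ gives $\dv h = 0$ weakly; to obtain $\ed h = 0$ one needs to promote the identity $(\ed h, b) = 0$ for $b \in V$ (coming from integration by parts against $h \perp F$) to the same identity against all smooth compactly supported $2$-forms, which requires running a parallel Hodge decomposition for $2$-forms. This again rests on the same Gaffney/closedness machinery, so $h$ is Hodge-harmonic ($\ed h = \dv h = 0$), hence a harmonic $1$-form. The pairwise $L^2$-orthogonality of $\ed a$, $\dv b$, $h$ immediately yields the Pythagorean identity, and the $W^{1,2}$-bounds follow from Poincar\'e and Gaffney. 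The principal obstacle is the Gaffney step used in Step 2 (and again in Step 3): Hilbert-space projections and Lax--Milgram are routine, but Gaffney is the genuine elliptic input on which the whole decomposition on a domain with boundary rests.
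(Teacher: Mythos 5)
The paper does not actually prove this lemma: the sentence immediately preceding it points to Morrey \cite[Chapter 7]{morrey} as the source for the Hodge decomposition, Gaffney's inequality, and the related potential theory, and then simply records the statement. So there is no internal proof to compare against, only the standard reference, and your attempt is really a reconstruction of Morrey's development.

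As such a reconstruction it is on the right track. You correctly identify the exact piece as the Dirichlet projection $\ed a$, correctly identify the Gaffney--Friedrichs inequality on $W_N^{1,2}$ (together with a Poincar\'e inequality modulo the finite-dimensional space of Neumann harmonic fields, which itself needs the compactness of $W_N^{1,2}\emb L^2$) as the elliptic input behind closedness of $\dv(V)$, and the Pythagorean identity does fall out of the pairwise orthogonality. The genuine gap is the last step, and it is a bit more serious than your phrasing suggests. From $(h,\dv b)=0$ for $b\in V$ you need $(h,\dv\psi)=0$ for every $\psi\in C_c^\infty(B_1,\bigwedge^2\R^n)$, and since $h$ is only $L^2$ this is not a formal integration by parts. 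Your proposed fix, running a parallel degree-two Hodge decomposition $\psi=\ed\alpha+\dv\beta+\gamma$, does not close the argument as written: the boundary condition you would naturally impose on the $\ed$-potential $\alpha$ (tangential trace zero, mirroring $a\in W_0^{1,2}$) gives $(\ed\alpha)_T=0$ but not $(\ed\alpha)_N=0$, so $\ed\alpha$ need not lie in $V$; and if you instead try to Hodge-decompose the $1$-form $\dv\psi$, you are invoking the very theorem you are proving. The non-circular route, and the one Morrey's Chapter 7 actually realises, is to forgo the second orthogonal projection and solve a Neumann-type boundary value problem for $b$ directly, namely $\Dl b=\ed\om$ with $b_N=0$ and $(\ed b)_N=0$, which is again Gaffney plus Lax--Milgram. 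One then checks $\Dl(\ed b)=0$, uses the boundary conditions together with the trivial (relative) cohomology of the ball to deduce $\ed b=0$, and verifies $\ed h=\dv h=0$ for $h:=\om-\ed a-\dv b$ by direct computation from the two potential equations and the boundary conditions. Apart from this one step, your outline matches the standard argument.
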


We note here that $W_N^{1,2}(B_1,\bigwedge^k \R^n)$ is the space of forms whose normal boundary part vanishes, which we may define in a trace sense or equivalently for any smooth $k-1$ form $\nu$ we have $(\om, \ed \nu) = (\dv \om, \nu)$ when $\om \in W_N^{1,2}(B_1,\bigwedge^k \R^n)$. 

Otherwise we have the more general formula for smooth $k$ and $k-1$ forms respectively
$$(\om ,\ed \nu) = (\dv \om , \nu) + \int_{\partial} \nu_T \wedge \ast \om_N$$
where $_T$ and $_N$ denote the tangential and normal components. (The latter holds for any appropriate Sobolev forms by approximation). Note we could easily define $W_T^{1,2}(B_1,\bigwedge^k \R^n)$ in a weak sense also. 
We use the following fact: For $a \in W^{1,2}(B_1,\bigwedge^{k-1}\R^n)$, $b \in W^{1,2}(B_1,\bigwedge^k\R^n)$ we have $(\ed a, \dv b ) = 0$ if either $a_T = 0$ or $b_N = 0$. 

Note that we have $\Dl a = \dv \om$ and $\Dl b = \ed \om$ in a weak sense since $\ed h = \dv h = 0$, $\ed b = 0$ and since $a$ is a function ($\Dl = \ed \dv + \dv \ed$).

\section{Absorption lemma}

We have changed the hypotheses of the following lemma compared to how it appears in \cite{simon_regularity} and \cite{Sh_To}, however upon inspection of the proof (which can be found in \cite{simon_regularity}) it can be checked that the lemma as it is stated here is also proved.
 
\begin{lemma} (Leon Simon 
\cite[\S 2.8, Lemma 2]{simon_regularity}.)
\label{LS}
Let $B_{\rho}(y) \subset \R^n$ be any ball, $k \in \R$, $\Gamma >0$, and let $\varphi$ be any $[0,\infty)$-valued convex sub additive function on the collection of balls in $B_{\rho}(y)$; thus $\varphi(A) \leq \sum_{j=1}^{N} \varphi(A_j)$ whenever $A,A_1,A_2,....,A_N$ are balls in $B_{\rho}(y)$ with $A\subset \bigcup_{j=1}^{N} A_j$ and $A \bigcap A_j \neq \emptyset$ for any $j$. There is an $\epsilon_0 = \epsilon_0 (k,n)>0$ such that if 
\begin{equation*}\label{leon}
\sigma^k \varphi(B_{\sigma /2}(z)) \leq \epsilon_0 \sigma^k \varphi(B_{\sigma}(z)) + \Gamma
\end{equation*}  
whenever $B_{2\sigma}(z) \subset B_{\rho}(y)$, then there exists some $C=C(k,n)<\infty$ such that
\begin{equation*}
\rho^k \varphi(B_{\rho /2}(y)) \leq C\Gamma.
\end{equation*} 
\end{lemma}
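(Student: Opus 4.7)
The plan is to iterate the hypothesis across dyadic scales combined with the subadditivity of $\varphi$. A preliminary observation is that subadditivity implies monotonicity under inclusion (by the trivial one-element cover), so $\varphi(A) \leq \varphi(B_\rho(y)) < \infty$ for every ball $A \subset B_\rho(y)$, an a priori finiteness that is essential for the absorption step.

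The first key step is to introduce a scale-wise supremum
\[
F(\sigma) := \sup\{\sigma^{k}\varphi(B_\sigma(z)) : B_{c\sigma}(z) \subset B_\rho(y)\},
\]
for a constant $c = c(n) \geq 2$ to be chosen large enough. Given $(z,\sigma)$ admissible, I would cover $B_\sigma(z)$ by $N = N(n)$ balls $B_{\sigma/2}(z_i)$ with $z_i \in B_\sigma(z)$, apply the subadditivity hypothesis to obtain $\varphi(B_\sigma(z)) \leq N \max_i \varphi(B_{\sigma/2}(z_i))$, and then apply the hypothesis at each $(z_i,\sigma)$ to obtain $\sigma^k \varphi(B_{\sigma/2}(z_i)) \leq \epsilon_0 \sigma^k \varphi(B_\sigma(z_i)) + \Gamma$. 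Provided the drifted centres $z_i$ remain in the admissibility region of $F$, this yields the self-improving inequality
\[
F(\sigma) \leq N\epsilon_0\, F(\sigma) + N\Gamma.
\]

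The second step is to choose $\epsilon_0 = \epsilon_0(n,k)$ small enough so that $N\epsilon_0 < 1$ (the $k$-dependence enters through the factors of $2^{-k}$ that arise when rescaling between the two radii appearing in the hypothesis). Since $F(\sigma)$ is finite by the monotonicity observation, absorption gives $F(\sigma) \leq C(n,k)\Gamma$. A final covering of $B_{\rho/2}(y)$ by boundedly many balls admissible for $F$ at a scale comparable to $\rho$, together with one more invocation of subadditivity, converts this into the desired estimate $\rho^k \varphi(B_{\rho/2}(y)) \leq C(n,k)\Gamma$.

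The main obstacle is the bookkeeping of admissibility under the covering: the drift of the centres $z_i$ by an amount of order $\sigma$ may push them outside the domain of $F$ as naively defined, so the constant $c$ must be calibrated with sufficient room to spare, or, if this single-parameter supremum cannot be made to close, one must work with a family of suprema indexed by a shrinking admissibility margin and run the iteration across these nested families. Getting this technicality right is what forces the final smallness threshold on $\epsilon_0$ to depend on both $n$ (through the covering multiplicity $N$) and $k$ (through the dyadic rescaling).
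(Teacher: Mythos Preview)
The paper does not give its own proof of this lemma; it merely cites Simon's monograph and remarks that the argument there still applies under the modified hypotheses. So there is nothing in the paper to compare your proposal against directly. That said, your plan --- cover by half-radius balls, invoke subadditivity, feed each piece through the hypothesis, and absorb via smallness of $\epsilon_0$ --- is the standard route and is essentially what Simon does.

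You are right that the centre-drift is the only real technicality, and that the single-margin supremum $F(\sigma)$ does not close as written: after one covering step the new centres $z_i$ require margin $c+1$ rather than $c$. The clean cure is in fact a variant of your second suggestion. Rather than nesting suprema, one first iterates the hypothesis dyadically at a \emph{fixed} centre $z$ (where there is no drift at all, since the admissibility condition $B_{2\sigma}(z)\subset B_\rho(y)$ only becomes easier as $\sigma$ shrinks), obtaining after $\ell$ steps an inequality of the form
\[
\sigma_\ell^k\,\varphi(B_{\sigma_\ell}(z)) \;\le\; (\epsilon_0\,2^{-k})^{\ell}\,\sigma_0^k\,\varphi(B_{\sigma_0}(z)) \;+\; C(k)\,\Gamma,
\qquad \sigma_\ell = 2^{-\ell}\sigma_0,
\]
and only \emph{then} performs a single covering of $B_{\rho/2}(y)$ by $O(2^{n\ell})$ balls of radius $\sigma_\ell$. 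Balancing the geometric gain $(\epsilon_0 2^{-k})^\ell$ against the covering loss $2^{n\ell}$ is what forces $\epsilon_0$ to depend on both $n$ and $k$; in your direct absorption scheme (were it to close) the threshold would depend on $n$ alone, with $k$ appearing only in the final constant $C$.
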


In particular we can apply this lemma when $\varphi(A)= \|k\|_{\M^{p,\beta}(A)}^p$.

\section{Scaling}\label{scal}
We will need to consider $u$, $\Om$ and $f$ solving $$-\Dl u = \Om .\D u + f$$ on some small ball $B_R(x_0) \In B_1$. In order to do so we re-scale $\hat{u}(x):= u(x_0+Rx)$, $\hat{\Om}(x):=R\Om(x_0+Rx)$ and $\hat{f}:=R^2f(x_0+Rx)$. First of all we see that $$-\Dl \hat{u} = \hat{\Om}.\D \hat{u} + \hat{f}$$ on $B_1$ and we list the scaling properties of the related norms as follows.
\begin{itemize}
\item
$\|\hat{\Om}\|_{\M^{2,n-2}(B_1)} = \|\Om\|_{\M^{2,n-2}(B_R(x_0))}$. 
\item
$[\hat{u}]_{C^{0,\gamma}(B_1)} =  R^{\gamma}[u]_{C^{0,\gamma}(B_R(x_0))}$.
\item
$\|\hat{u}\|_{L^1(B_1)} = R^{-n} \|u\|_{L^1(B_R(x_0))}$.
\item
$\|\D \hat{u}\|_{\M^{l,\nu}(B_1)} = R^{\fr{l-(n-\nu)}{l}}\|\D u\|_{\M^{l,\nu}(B_R(x_0))}$ and setting $\nu =0$ gives $\|\D \hat{u}\|_{L^l(B_1)} = R^{1-\fr{n}{l}}\|\D u\|_{L^l(B_R(x_0))}$.

\item
We also have that the Lorentz spaces $L^{(l,\infty)}$ or `weak'-$L^l$ scale in the same fashion as the usual $L^l$ spaces,
$\|\D \hat{u}\|_{L^{(l,\infty)}(B_1)} = R^{1-\fr{n}{l}}\|\D u\|_{L^{(l,\infty)}(B_R(x_0))}.$
\item
$\|\hat{f}\|_{L^p(B_1)} = R^{2-\fr{n}{p}}\|f\|_{L^p(B_R(x_0))}$.
\item
For $f \in L^p(B_1)$ and $1\leq s\leq p$ we have 
$\|f\|_{\M^{s,n(1-\fr{s}{p})}(B_1)} \leq C\|f\|_{L^p(B_1)}$
for $C=C(n,p)$.\end{itemize}
\bibliographystyle{alpha}

\end{document}